\documentclass[12pt]{article}

\usepackage{amsmath,amsthm,amsfonts,amssymb,amscd}
\newtheorem {Lemma}{Lemma}[section]
\newtheorem {Theorem}{Theorem}[section]

\newtheorem {Corollary}{Corollary}[section]

\numberwithin{equation}{section}

\allowdisplaybreaks [4]
\usepackage{fullpage}
\usepackage{enumerate}
\usepackage[utf8]{inputenc}
\usepackage{authblk}

\newenvironment{conditions}
  {\begin{enumerate}[{\upshape (i)}]}
  {\end{enumerate}}

\begin{document}

\title{On distance spectral radius of power hypertrees with given number of pendant paths of fixed length}
\author[1]{Yanna Wang$^{1}$\footnote{E-mail: wangyn@gdcp.edu.cn},  Xuli Qi$^{2}$\footnote{E-mail: xuliqi@hebust.edu.cn}\\
$^{1}$Basic Courses Department, Guangdong Communication Polytechnic, \\
Guangzhou 510650, P.R. China\\
$^{2}$Department of Mathematics, Hebei University of Science and Technology, Shijiazhuang 050018, P.R. China}

\date{}

\maketitle

\begin{abstract}
The distance spectral radius of a connected hypergraph is the largest eigenvalue of the distance matrix of the hypergraph. A pendant path of length $\ell$ with $\ell \ge 1$ in a hypergraph $G$ at $v_{\ell+1}$ is a path $P=(v_1,e_1,v_2,\dots, v_{\ell},e_\ell,v_{\ell+1})$ such that the degree of $v_{\ell+1}$ is at least $2$, the degree of $v_1$ is $1$ and  the degree of $v_i$ is $2$  for $i=2,\dots,\ell$, and the degree of $v$ is $1$ for every $v \in e_i\backslash \{v_{i},v_{i+1}\}$ with $i=1,\dots,\ell$.
We determine the unique hypertree  that maximizes (minimizes, respectively) the distance spectral radius over  all $r$-th power hypertrees of $m$ edges with  $k$ pendant paths of length $\ell$,  where $r\geq 3$, $k,\ell\geq 1$  and $ k\ell<m$. \\ \\
{\it Keywords:} distance spectral radius, distance matrix, pendant path, power hypertree\\ \\
{\it Mathematics Subject Classification:}  05C50, 05C65
\end{abstract}

\section{Introduction}

A  hypergraph $G$ consists of a vertex set $V(G)$  and an edge set $E(G)$,  where every edge in $E(G)$ is a subset of $V(G)$ containing at least two vertices,
 see \cite{Ber}.   For an integer $r\geq 2$, we say that  $G$ is $r$-uniform if every edge of $G$ contains exactly $r$ vertices. An ordinary  graph is just a $2$-uniform hypergraph.
 For distinct vertices $v_0,\dots, v_{p}$ and distinct edges $e_1, \dots, e_p$ of $G$,
the alternating sequence %of vertices and edges
$(v_0,e_1,v_1,\dots,v_{p-1},e_p,v_p)$ such that $v_{i-1}, v_i\in e_i$ for $i=1,\dots,p$ and $e_i\cap e_j=\emptyset$ for $i,j=1,\dots,p$ with $\mid i-j \mid>1$
%$j>i+1$
is a  loose path of $G$ from $v_0$ to $v_p$ of length $p$.
If there is a  loose path from $u$ to $v$ for any $u,v\in V(G)$, then we say that $G$ is connected.
Denote by $P_{m,r}$ the $r$-uniform hypergraph  consisting of a  loose path of length  $m$.
For distinct vertices $v_0,\dots, v_{p-1}$ and distinct edges $e_1, \dots, e_p$,
the alternating sequence %of  vertices and  edges
$(v_0,e_1,v_1,\dots,v_{p-1},e_p,v_0)$ such that $v_{i-1},v_i\in e_i$ for $i=1, \dots, p$ with $v_p=v_0$ and $e_i\cap e_j=\emptyset$ for $i,j=1,\dots,p$ with $\mid i-j \mid>1$ and $\{i,j\}\neq \{1,p\}$
is a  loose cycle of $G$ of length $p$. A hypertree is a connected hypergraph with no  loose cycles.

 For $u,v\in V(G)$, if they are contained in an edge of $G$, then we say that they are adjacent, or $v$ is a neighbor of $u$. For $u\in V(G)$, let $N_G(u)$ be the set of neighbors of $u$ in $G$ and $E_G(u)$ be the set of edges containing $u$ in $G$. The degree of a vertex $u$ in $G$, denoted by $\delta_G(u)$, is $|E_G(u)|$.

%Any  hypergraph $G$ corresponds naturally to a graph  $O_G$  with $V(O_G)=V(G)$
%such that for $u,v\in V(O_G)$,  $\{u, v\}$ is an edge of $O_G$ if and only if $u$ and $v$ are in some edge of $G$. Obviously, an edge of $G$ with size $r$ corresponds naturally to a clique (maximal $2$-connected subgraph)  of $O_G$ with size $r$.

%If $G$ is connected, then the distance between vertices  $u$ and $v$ in $G$ (or $O_G$), denoted by  $d_{G}(u,v)$,  is the length of a shortest  loose path connecting them in $G$.
%Let $G$ be a connected hypergraph  on $n$ vertices. The distance matrix of $G$ is defined as $D(G)=(d_{G}(u,v))_{u, v\in V(G)}$.
%The distance spectral radius of $G$, denoted by $\rho(G)$, is the largest eigenvalue of $D(G)$.
%The  eigenvalues of distance matrices of graphs, arisen from a data communication problem studied by Graham and Pollack \cite{GP} in 1971, have been studied extensively,
%and particularly, the distance spectral radius received much attention, see the
 %survey \cite{AH-2}. We mentioned that the distance spectral radius has also been used as a molecular descriptor, see \cite{BCM,GM}.
%Watanabe et al. \cite{WIS} studied spectral properties of the distance matrix of uniform hypertrees, generalizing some results by Graham and Pollak \cite{GP} and Sivasubramanian~\cite{Si}.

For  $r\geq 2$ and a graph $G$ on $n$ vertices, the $r$-th power of $G$ is defined as the $r$-uniform hypergraph on $n+(r-2)|E(G)|$ vertices with vertex set $V(G)\cup (\cup_{e\in E(G)}V_e)$ and edge set $\{e\cup V_e:e\in E(G)\}$, where $|V_e|=r-2$ for $e\in E(G)$, see \cite{HQS}. Obviously, the $2$-th power of $G$ is $G$ itself. A hypergraph is an $r$-th power hypertree if it is the $r$-th power of some tree. A pendant path of length $\ell$ with $\ell \ge 1$ in a hypergraph $G$ at $v_{\ell+1}$ is a path $P=(v_1,e_1,v_2,\dots, v_{\ell},e_\ell,v_{\ell+1})$ such that $\delta_G(v_{\ell+1})\geq 2$, and $\delta_G(v_1)=1$ and  $\delta_G(v_i)=2$  for $i=2,\dots,\ell$, $\delta_G(v)=1$ for $v \in e_i\backslash \{v_{i},v_{i+1}\}$ with $i=1,\dots,\ell$. If  each edge of a pendant path consists of $r$ vertices, then it is said to be $r$-uniform.
An edge $e$ of a hypergraph $G$ is called a pendant edge of $G$ at $v$ if $v\in e$, the degree of all vertices of $e$ except $v$ in $G$ is one, and $\delta_G(v)\geq 2$.
A pendant vertex is a vertex of degree one.

Let $G$ be a connected hypergraph  on $n$ vertices.
For $u,v\in V(G)$,  the distance between $u$ and $v$ in $G$, denoted by  $d_{G}(u,v)$,  is the length of a shortest  loose  path connecting them in $G$. In particular, $d_{G}(u,u)=0$.
The distance matrix of $G$ is defined as $D(G)=(d_{G}(u,v))_{u, v\in V(G)}$.  The largest
eigenvalue of $D(G)$ is called the distance
spectral radius (or $D$-spectral radius) of $G$, denoted by $\rho(G)$.
%As $D(G)$ is irreducible, the well known
%Perron-Frobenius theorem implies that $\rho(G)$ is simple and there is a unique
%unit positive eigenvector  corresponding to $\rho(G)$,
%which we call the distance Perron vector of $G$, denoted by $x(G)$.
In the literature, the  spectral properties of the distance matrices (or $D$-spectral properties) of graphs were first studied by Graham and
Pollak \cite{GP} in 1971, where, among other results,  they established a connection between the number of negative eigenvalues of the distance matrix and a data communication problem.
In 2012, Watanabe et al.~\cite{WIS} studied the $D$-spectral properties of uniform hypertrees, generalizing some results in \cite{GP}, see also the work of Sivasubramanian~\cite{Si} in 2009.
Now,  the  $D$-spectral properties of graphs or hypergraphs
have been studied extensively,
and among these studies, the $D$-spectral radius received much attention, see the
 survey \cite{AH-2}.
We note that   the  $D$-spectral properties of a hypergraph $H$ are actually those of a particular graph $G$, where each edge of size $r$ of $H$ corresponds to a clique  of size $r$ in $G$.
Lin  and Zhou~\cite{LZ1}
studied the $D$-spectral radius of  uniform hypergraphs and particularly, uniform hypertrees.
 Wang and Zhou \cite{WZ3}   determined the unique hypertree with maximum $D$-spectral radius  in the class of  power hypertrees of given matching number.
Liu et al. \cite{LWL} determined the unique hypertree with minimum $D$-spectral radius among all uniform hypertrees of given matching number, and hypertree with minimum $D$-spectral radius among all uniform hypertrees  of given independence number.
Some more results on the $D$-spectral radius of a hypergraph may be found in \cite{LiZ,LZL,WZ2}.
%
%Li and Zhou \cite{LiZ} determined  the unique hypertrees that minimize the distance spectral radius among all uniform hypertrees with fixed size and one of the parameters, such as number of pendant edges, diameter, and maximum degree that is more than half of the size.
%Wang  and Zhou~\cite{WZ5}  determined the unique graph that maximizes the $D$-spectral radius over  all cacti with $n$ vertices and $k$ cycles, and thus proved a conjecture on the $D$-spectral radius of cacti
%in \cite{SMS}. Notably, they proved the result in the context of hypertrees, showing that in some cases the hypergraph notation is  more convenient and effective.  So the research on the distance spectra of hypergraphs still has its place and warrants further exploration.

In \cite{WZ4}, Wang and Zhou determined the unique tree  that maximizes (respectively minimizes) the $D$-spectral radius over  all trees on $n$ vertices with  $k$ pendant paths of length $\ell$, where $k,\ell\geq 1$ and $k\ell<n-1$.
In this article, we extend the results in \cite{WZ4} from trees to power hypertrees. More precisely,
we determine the unique hypertree  that maximizes (minimizes, respectively) the $D$-spectral radius over  all $r$-th power hypertrees of $m$ edges with  $k$ pendant paths of length $\ell$,   where $r\geq 3$, $k,\ell\geq 1$  and $ k\ell<m$, see Theorem \ref{t0} and \ref{t1} below.
To obtain these results, we give some results related to the entries of the distance Perron vector of  hypergraphs with particular structure.

For integers $m$, $r$, $\ell$, $a$ and $b$ with  $1\leq a\leq b$ and $\ell(a+b)\leq m-1$,
denote by $D_{r,\ell}(m,a,b)$  the $r$-uniform hypertree obtained  from the $r$-uniform path
\[
P_{m-\ell(a+b),r}=(v_1, e_1,v_2, \dots,  v_{m-\ell(a+b)}, e_{m-\ell(a+b)}, v_{m-\ell(a+b)+1})
\]
by attaching $a$ and $b$ pendant $r$-uniform paths of length $\ell$ at $v_1$ and $v_{m-\ell(a+b)+1}$, respectively.

\begin{Theorem} \label{t0}
Let $T$ be an $r$-th power hypertree of $m$ edges with  $k$ pendant paths of length $\ell$, where $r\geq 3$. %$\ell\geq 1$ and $1\leq k<\frac{n-1}{\ell(r-1)}$.

\begin{conditions}

\item If $k=2$ and $1\leq \ell\leq m-1$,   then $\rho(T)\leq \rho (P_{m,r})$ with equality if and only if $T\cong P_{m,r}$.

\item If $k\geq 3$ and $1\leq \ell < \frac{m}{k}$,
then $\rho(T)\leq \rho (D_{r,\ell}(m,\lfloor\frac{k}{2}\rfloor, \lceil\frac{k}{2}\rceil))$ with equality if and only if $T\cong D_{r,\ell}(m,\lfloor\frac{k}{2}\rfloor, \lceil\frac{k}{2}\rceil)$.

\item If $k=1$ and $2\leq \ell\leq m-2$,   then $\rho(T)\leq \rho (D_{r,1}(m,1,2))$ with equality if and only if $T\cong D_{r,1}(m,1,2)$.
\end{conditions}
\end{Theorem}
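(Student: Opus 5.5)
We prove Theorem~\ref{t0} by a sequence of grafting transformations on the underlying tree, each of which strictly increases $\rho$ and preserves the power-hypertree structure and the number of pendant paths of length $\ell$.

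\textbf{Tool: a grafting lemma.} Let $x$ be the distance Perron vector of $T$. For a vertex $u$ and two branches $B_1,B_2$ hanging at $u$, moving a subtree from one branch vertex to another changes $\rho(T)$ by at least
\[
x^{\top}\bigl(D(T')-D(T)\bigr)x .
\]
We evaluate this quadratic form blockwise. Distances within a moved block are unchanged. Distances between the moved block and the rest change by a constant amount on each part. Hence the form reduces to a product of sums of Perron entries over vertex sets.

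The key comparison is the following. If $T$ has a cut vertex $u$ with components $A$ and $B$ and $\sum_{A}x\ge\sum_{B}x$, then moving a pendant structure from $u$ outward into $A$, to its far end, strictly increases $\rho$. This is the hypergraph analogue of the path-lengthening lemma used in \cite{WZ4}. In the power hypertree setting, the $r-2$ degree-one vertices in each edge only multiply certain sums by constants. The cliques corresponding to edges keep distances additive along the underlying tree, so the argument transfers.

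\textbf{Case (ii), $k\ge 3$.} Let $T$ be extremal. Every vertex of degree at least $3$ not on a pendant path contributes a branching.
\begin{enumerate}[(1)]
\item Show that all branching vertices that are ends of pendant paths coincide with the two ends of a diametral path. If a branch of $T$, other than the $k$ pendant paths of length $\ell$, were not a path, we could graft it to lengthen the diametral path. This increases $\rho$ by the lemma applied with $A$ the heavier side, which exists by comparing Perron sums.
\item Conclude that $T\cong D_{r,\ell}(m,a,b)$ with $a+b=k$. Extra pendant paths of other lengths can be absorbed, since lengthening keeps the count of length-$\ell$ paths at $k$ only if no other leaves exist. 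So $T$ has exactly $k$ leaves, split between two branch points.
\item Show $|a-b|\le 1$. If $b\ge a+2$, move one pendant $\ell$-path from $v_{\mathrm{end}}$ to $v_1$ and compare via $x^{\top}(D'-D)x$. Using the symmetry/monotonicity of Perron entries along the central path, namely that entries increase toward the side with fewer branches at the far end, the difference is positive.
\end{enumerate}
Step (3) is the main obstacle: it requires a precise comparison of $\sum x$ over the two end clusters. I would first try the standard trick of an automorphism-type argument. Compare $T$ with the graph obtained by moving the path, and write the difference as $(\sigma_1-\sigma_2)\cdot(\text{positive})$, where $\sigma_i$ are Perron sums near the two ends. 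Then show the sign of $\sigma_1-\sigma_2$ from the eigen-equations restricted to the central path, where entries satisfy a second-order linear recurrence.

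\textbf{Case (i), $k=2$.} The only trees with exactly two pendant paths are paths, whose two pendant paths cover everything. Alternatively, more leaves exist whose pendant paths have other lengths, and grafting these to lengthen yields $P_{m,r}$. Uniqueness follows from the classical fact that the path maximizes $\rho$ among hypertrees.

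\textbf{Case (iii), $k=1$.} We need at least one pendant path of length $\ell$. A path has two such paths only when lengths are suitable, so we must keep some branching. Grafting reduces to trees with three leaves at a single branch configuration, i.e.\ $D_{r,1}$-type shapes with pendant paths of lengths $1,1$ and one long arm containing the length-$\ell$ path. Among these, maximizing the diameter gives $D_{r,1}(m,1,2)$: two pendant edges at one end and one at the other, with the long path in between providing the required length-$\ell$ pendant segment. Here the lemma from the first paragraph is applied twice, once to push leaves to ends and once to show that two leaves at one end beats spreading them out.
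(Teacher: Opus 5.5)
Your overall plan (graft until $T\cong D_{r,\ell}(m,a,b)$, then balance $a$ and $b$) matches the paper's. Your closing argument for (i), the extremality of $P_{m,r}$ among all $r$-uniform hypertrees with $m$ edges, is exactly the paper's. Your opening sentence there is false: a spider with legs $\ell,\ell,1$ has exactly two pendant paths of length $\ell$. It is not needed, though.

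The grafting tool you use for (ii)(1)--(2) and for (iii) has its hypothesis reversed. The valid statements are Lemma~\ref{3} and Lemma~\ref{path}. Lemma~\ref{3} moves branches at $u$ into the side with the \emph{smaller} Perron sum: distances to the heavier side grow by $d(u,v)$, while those to the receiving side shrink by at most $d(u,v)$. Lemma~\ref{path} lengthens the longer of two pendant paths at a vertex. Moving into the heavier side can decrease $\rho$. Take $D_{r,\ell}(m,2,N)$ with $N$ large. The branch at $v_1$ containing $v_t$ is far heavier than the other pendant path at $v_1$, since Perron entries differ by at most a factor depending on the fixed diameter $2\ell+t-1$. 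Yet moving a pendant path from $v_1$ to $v_t$ gives $D_{r,\ell}(m,1,N+1)$, whose $\rho$ is smaller by Lemma~\ref{ab-2}. So your lemma as stated contradicts (ii) itself. Step (1) also assumes without comment that there are two distinct roots. The case where all $k$ pendant paths hang at one vertex must be excluded separately, which the paper does with Lemma~\ref{path}.

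The decisive gap is step (3), which you leave open, and the route you sketch would not close it. The first-order change is not of the form $(\sigma_1-\sigma_2)\cdot(\text{positive})$, because distances from the moved path to the central vertices change by varying amounts. Let $v_1,\dots,v_t$ with $t=m-\ell(a+b)+1$ be the central path, and take $w_i\in e_i\setminus\{v_i,v_{i+1}\}$. Let $S_1,S_2$ be the Perron sums of one pendant path (without its root) at $v_1$ and at $v_t$, respectively. Moving one path from $v_t$ to $v_1$ gives $\frac{1}{2}x^{\top}(D(T')-D(T))x=S_2L$ with
\[
L=(t-1)\bigl((b-1)S_2-aS_1\bigr)-C,\qquad C=\sum_{i=1}^{t}(t-2i+1)x_{v_i}+(r-2)\sum_{i=1}^{t-1}(t-2i)x_{w_i}.
\]
The monotonicity you invoke (entries larger toward the $a$-side) gives $S_1>S_2$ and $C>0$. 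Both work \emph{against} $L>0$, and even knowing $aS_1<bS_2$ does not settle the sign.

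The paper wins quantitatively, in three steps:
\begin{enumerate}[(a)]
\item The eigenequations at $v_1$ and $v_t$ give $\rho(T)(x_{v_1}-x_{v_t})=2L+(t-1)\bigl(aS_1-(b-2)S_2\bigr)+C\le 2L+a(t-1)(S_1-S_2)+C$.
\item Lemma~\ref{pendant}(ii) and Lemma~\ref{ab}(ii), the latter obtained from Lemma~\ref{77}, bound every difference appearing in $S_1-S_2$ and in $C$ by $x_{v_1}-x_{v_t}>0$. Hence these two terms are less than $K(x_{v_1}-x_{v_t})$, where $K=a\ell(r-1)(t-1)+\sum_{i=1}^{\lfloor t/2\rfloor}(t-2i+1)+(r-2)\sum_{i=1}^{\lfloor (t-1)/2\rfloor}(t-2i)$.
\item A status bound (Lemma~\ref{sum}) gives $\rho(T)>K$, which forces $L>0$.
\end{enumerate}
Your plan is missing two things: reducing all differences to $x_{v_1}-x_{v_t}$ via the monotonicity lemmas, and the explicit lower bound $\rho(T)>K$. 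A recurrence along the central path alone yields only the monotonicity, not this comparison.
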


%For integers $m$, $r$, $\ell$ and $k$ with  $r\geq 3$, $\ell \geq 2$  and $1\leq k<\frac{m}{\ell} $,  denote by $S_{r,\ell}(m,k)$  the hypertree consisting of $k$ pendant paths of length $\ell$ and $m-k\ell$ pendant edges
% paths of length one
%at a common vertex.

For integers $m$, $r$, $\ell$ and $k$ with  $r\geq 3$, $k=1$ and $2\leq \ell \leq m-2$, or $k\geq 2$, $\ell \geq 2$ and $k\ell\leq m-1$, denote by $S_{r,\ell}(m,k)$  the hypertree consisting of $k$ pendant paths of length $\ell$ and $m-k\ell$ pendant edges
at a common vertex.

\begin{Theorem} \label{t1}
Let $T$ be a  $r$-th power hypertree of  $m$ edges with  $k$ pendant paths of length $\ell$, where
$r\geq 3$, $k=1$ and $2\leq \ell \leq m-2$, or $k\geq 2$, $\ell \geq 2$ and $k\ell\leq m-1$.
%$\ell\geq 2$ and $1\leq k<\frac{m}{\ell} $.
Then  $\rho(T)\geq \rho (S_{r,\ell}(m,k))$ with equality if and only if $T\cong S_{r,\ell}(m,k)$.
\end{Theorem}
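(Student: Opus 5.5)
We prove Theorem \ref{t1} by a sequence of grafting transformations. Each transformation keeps the class of $r$-th power hypertrees with $m$ edges and exactly $k$ pendant paths of length $\ell$, and strictly decreases $\rho$. We repeat until the hypertree becomes $S_{r,\ell}(m,k)$.

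The basic tool is the distance Perron vector $x=x(T)$, the unit positive eigenvector of $D(T)$ for $\rho(T)$. Suppose $T'$ is obtained from $T$ by moving a set of branches. Then
\[
\rho(T')-\rho(T)\le x^{\top}(D(T')-D(T))x ,
\]
and $\rho(T')<\rho(T)$ as soon as $x^{\top}(D(T')-D(T))x<0$. We also use two symmetry facts. Entries of $x$ at vertices lying symmetrically with respect to an automorphism are equal. In an $r$-th power hypertree, the $r-2$ vertices of degree one in an edge all carry the same entry, so each edge contributes a controlled weight. These facts let us write $x^{\top}(D(T')-D(T))x$ as a sum over pairs of blocks, where every distance changes by a fixed integer.

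\textbf{Step 1: concentrate all branching at one vertex.} Suppose $T$ has two vertices $u,v$ of degree at least $3$, or more generally two vertices carrying branches. Take such a pair joined by a path $P$ and let $w$ be an endpoint of $P$. Compare the two hypertrees obtained by moving all branches at $u$ (other than the one containing $P$) to $v$, or conversely. The standard argument goes as follows. Let $A$ be the branches at $u$ and $B$ the branches at $v$. If $\sum_{z\in A}x_z\ge \sum_{z\in B}x_z$, moving $B$ to $u$ strictly decreases $\rho$; otherwise, moving $A$ to $v$ does. Distances between the moved part and the path only shrink, and distances within each part are unchanged. The moves do not alter which paths are pendant paths of length $\ell$, provided we move whole branches and never touch the interior of a pendant path. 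Iterating, we reach a hypertree in which every non-pendant structure collapses onto a single center $c$. That is, $T$ becomes a "spider": internally disjoint paths at $c$ of lengths $\ell_1,\dots,\ell_t$, with exactly $k$ of the $\ell_i$ equal to $\ell$.

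\textbf{Step 2: shorten the legs whose length is not $\ell$.} If a leg has length $s\ge2$ with $s\ne \ell$, we cut its last edges and reattach them as pendant edges at $c$. The comparison again uses the Perron vector: moving a pendant edge from the end of a long leg to the center lowers many distances and raises none. We must be careful in two cases:
\begin{itemize}
\item when $\ell=1$-type pendant edges are counted;
\item when breaking a leg of length $s$ could create a new pendant path of length $\ell$.
\end{itemize}
To avoid the second problem, we move one edge at a time from legs of length $s>\ell$. The leg first passes through length $\ell$, so we instead move all $s$ edges onto the center as pendant edges in one step, or move $s-\ell$ edges to a leg of length $\ell$'s neighbourhood. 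The conditions $\ell\ge2$ and $k\ell\le m-1$ (or $k=1$, $\ell\le m-2$) guarantee that at least one pendant edge remains, so $c$ has degree at least $2$ and the $k$ paths stay pendant paths. The end result is $S_{r,\ell}(m,k)$.

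\textbf{Main obstacle.} The main difficulty is Step 2 for legs with $s<\ell$, where $s\ge2$. Here the quadratic-form difference compares gains near $c$ with losses far from $c$. I would handle it with a lemma of the form: for a leg $(c=w_0,w_1,\dots,w_s)$, the entries satisfy $x_{w_i}$ increasing in $i$, so the far vertices carry more weight. Then moving the whole leg to $c$ as $s$ pendant edges gives
\[
x^{\top}(D(T')-D(T))x<0 ,
\]
by pairing each moved vertex at former distance $i$ from $c$ with the vertices outside the leg. I expect proving this monotonicity and pairing inequality for general $r\ge3$ to be the key technical step.
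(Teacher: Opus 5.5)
\textbf{There is a genuine gap, and it is in your basic tool.} With $x=x(T)$, Rayleigh's principle gives $\rho(T')\ge x^{\top}D(T')x$, so
\[
\rho(T')-\rho(T)\ \ge\ x^{\top}\bigl(D(T')-D(T)\bigr)x,
\]
which is the reverse of what you wrote. A negative value of this form, computed with the Perron vector of the \emph{old} hypertree, only gives a vacuous lower bound. It says nothing about $\rho(T')<\rho(T)$. A strict decrease requires $y^{\top}(D(T)-D(T'))y>0$ with $y=x(T')$, the Perron vector of the \emph{new} hypertree.

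This breaks the either--or argument of Step 1. The dichotomy on $\sum_{z\in A}x_z$ versus $\sum_{z\in B}x_z$ works for a quantity linear in the distances, such as the Wiener index. For $\rho$, the two candidate hypertrees have two different Perron vectors, and the sign comparison no longer goes through. Also, distances from the moved part to the path do not ``only shrink'': after moving $B$ from $v$ to $u$, path vertices nearer to $v$ are farther from $B$.

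Step 2 has the same defect. Your planned monotonicity lemma concerns the wrong vector. The claim that moving the last edge of a leg to the centre raises no distance is false: $d(w_s,w_{s-1})$ goes from $1$ to $s$. And, as you say yourself, the case $2\le s<\ell$ is left unproved.

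You have also misread the definition. Here a pendant path ends at a vertex of degree at least $2$, so it need not be maximal. A leg of length $s\ge\ell$ already contains a pendant path of length $\ell$; this is why $P_{m,r}$ has two pendant paths of length $\ell$ for every $1\le\ell\le m-1$. So the class is not ``spiders with exactly $k$ legs of length $\ell$''. Dissolving a leg of length $s>\ell$ into $s$ pendant edges, as you propose, would change $k$.

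\textbf{How the paper proceeds.} The paper avoids all global Perron-vector estimates by using the cited local Lemma \ref{complete}. It says: if $w_1,w_2$ are two vertices of one edge that are twins in $G_0$, and both carry further branches, then moving all other edges at $w_2$ onto $w_1$ strictly decreases $\rho$. In an $r$-th power hypertree, the two vertices of an edge coming from the underlying tree are such twins, since all other vertices of the edge have degree one. For a minimizer $T$ the argument has two steps.
\begin{enumerate}[{\upshape (i)}]
\item Suppose two vertices $u_1\neq u_2$ carry pendant paths of length $\ell$. Collapsing the neighbour $u_3$ of $u_1$ on the $u_1$--$u_2$ path into $u_1$ contradicts minimality. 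So all $k$ paths hang at one vertex $w$.
\item Any other neighbour $z$ of $w$ with $\delta_T(z)\ge 2$ can likewise be collapsed into $w$. So every remaining branch at $w$ is a pendant edge, and $T\cong S_{r,\ell}(m,k)$.
\end{enumerate}
Each move crosses a single edge, so a short leg is shortened one edge at a time. Your ``main obstacle'' never arises, and no monotonicity estimate is needed.
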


\section{Preliminaries}

For a connected hypergraph $G$, as $D(G)$ is irreducible, the well known
Perron-Frobenius theorem implies that $\rho(G)$ is simple and there is a unique
unit positive eigenvector  corresponding to $\rho(G)$,
which we call the distance Perron vector of $G$, denoted by $x(G)$.
If $x=(x_{v_1},\dots,x_{v_{n}})^{T}\in\mathbb{R}^{n}$ with
$V(G)=\{v_1,\dots,v_n\}$, then
$x^{\top}D(G)x=2\sum_{\{u,v\}\subseteq V(G)}d_G(u,v)x_ux_v$.
Moreover, if $x$ is unit, then Rayleigh's principle states that  $\rho(G)\geq x^{\top}D(G)x$ with equality   when $x$ has at least one nonnegative component
if and only if $x=x(G)$.
If  $x=x(G)$, then  for any $u\in V(G)$,
$\rho(G)x_u=\sum_{v\in V(G)}d_G(u,v)x_v$,
which we call the $D(G)$-eigenequation at $u$.

For  a connected hypergraph $G$ with  $V_1\subseteq V(G)$, let $\sigma_{G}(V_1)$ be the sum of the entries of the distance Perron vector of $G$ corresponding to the vertices in $V_1$. Furthermore, if all  the vertices of $V_1$ induce a connected subhypergraph $H$ of $G$, then we write $\sigma_{G}(H)$ for  $\sigma_{G}(V_1)$.

For $e\in E(G)$, let $G-e$ be the subhypergraph of $G$ obtained by deleting $e$.
%For $V_1\in V(G)$, let $G[V_1]$ be the subhypergraph of $G$ induced by  $V_1$.

\begin{Lemma}  \label{automorphism}
\cite{LZ1}  Let $G$ be a connected hypergraph with $\eta$ being an automorphism of $G$ and $x=x(G)$. Then $\eta(u)=v$ implies that $x_u=x_v$ for $u,v\in V(G)$.
\end{Lemma}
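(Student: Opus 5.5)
The statement to prove is Lemma \ref{automorphism}: if $\eta$ is an automorphism of the connected hypergraph $G$ and $x=x(G)$, then $\eta(u)=v$ implies $x_u=x_v$. The plan is to use the fact that automorphisms preserve distances, so they commute with $D(G)$. Then uniqueness of the Perron vector forces $x$ to be invariant under $\eta$.

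First, I check that $\eta$ preserves distances. An automorphism is a bijection on $V(G)$ that maps edges to edges bijectively, and its inverse does the same. So it sends loose paths to loose paths of the same length. The conditions $v_{i-1},v_i\in e_i$, distinctness, and $e_i\cap e_j=\emptyset$ for $|i-j|>1$ are all preserved by a bijection respecting edges. Applying the same argument to $\eta^{-1}$ gives
\[
d_G(\eta(a),\eta(b))=d_G(a,b)
\]
for all $a,b\in V(G)$.

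Second, I define $y\in\mathbb{R}^n$ by $y_a=x_{\eta(a)}$ for $a\in V(G)$. Fix $a\in V(G)$ and start from the $D(G)$-eigenequation at $\eta(a)$. Reindex the sum by $b=\eta(c)$, which is legitimate because $\eta$ is a bijection. This gives
\[
\rho(G)y_a=\rho(G)x_{\eta(a)}=\sum_{b}d_G(\eta(a),b)x_b=\sum_{c}d_G(\eta(a),\eta(c))x_{\eta(c)}=\sum_c d_G(a,c)y_c .
\]
So $D(G)y=\rho(G)y$. Moreover, $y$ is positive and has unit norm, because it is a permutation of the entries of $x$.

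Third, I invoke Perron--Frobenius. Since $G$ is connected, $D(G)$ is irreducible, $\rho(G)$ is simple, and the unit positive eigenvector is unique. Hence $y=x$, so $x_{\eta(u)}=x_u$. With $v=\eta(u)$ this gives $x_u=x_v$.

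The only point needing care is the first step, namely that automorphisms preserve the loose-path distance. It is routine once one notes that the definition of a loose path depends only on incidences and intersections, and a bijection that maps edges to edges preserves both.
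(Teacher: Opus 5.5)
Your proof is correct, and it is the standard argument. The paper gives no proof of its own here and only cites Lin and Zhou. In that argument the permutation matrix $P$ of $\eta$ satisfies $P^{\top}D(G)P=D(G)$, so $Px$ is again a unit positive eigenvector for $\rho(G)$ and must equal $x$ by Perron--Frobenius uniqueness. Your reindexing $y_a=x_{\eta(a)}$ is that identity written coordinatewise, and your check that $\eta$ and $\eta^{-1}$ map loose paths to loose paths of the same length supplies the detail usually left implicit.
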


\begin{Lemma}  \label{66}\cite{WZ5}
Let $T$ be a hypertree with two edges, say $e_1$ and $e_2$. Suppose that $u_i,v_i \in e_i$ for each $i=1,2$, and $d_T(u_1,u_2)=d_T(v_1,v_2)+2$. For $i=1,2$, let $T_i$ be the component of  $T-e_i$ containing $u_i$ and $A_i=\{w\in V(T): d_T(w,u_i)=d_T(w,v_i)\}$. Let $x=x(T)$.
Then
\[
\rho(T)(x_{u_1}-x_{u_2})-\rho(T)(x_{v_1}-x_{v_2})=2(\sigma_{T}(T_2)-\sigma_{T}(T_1))+\sigma_{T}(A_2)-\sigma_{T}(A_1).
\]
\end{Lemma}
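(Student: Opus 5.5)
The plan is to compute each difference $x_{u_i}-x_{v_i}$ separately from the $D(T)$-eigenequations, and then subtract the two expressions. The hypothesis $d_T(u_1,u_2)=d_T(v_1,v_2)+2$ will not enter the computation itself. It only guarantees that $u_i\neq v_i$ and that the pairs are arranged as $u_1,v_1,\dots,v_2,u_2$ along a path, so that the configuration is nondegenerate.

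\textbf{Step 1: a partition of $V(T)$ for each edge.} Fix $i\in\{1,2\}$. Since $T$ is a hypertree, its vertex--edge incidence graph is a tree. Hence $T-e_i$ splits into components, one through each vertex of $e_i$. Let $B_i$ be the vertex set of the component of $T-e_i$ containing $v_i$.

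\begin{itemize}
\item For $w\in V(T_i)$, the unique loose path from $w$ to $v_i$ passes through $u_i$ and then uses $e_i$. So $d_T(w,v_i)=d_T(w,u_i)+1$.
\item Symmetrically, for $w\in B_i$ we have $d_T(w,u_i)=d_T(w,v_i)+1$.
\item For $w$ in a component through any other vertex $z\in e_i$, both distances equal $d_T(w,z)+1$. Thus these vertices are exactly the set $A_i$.
\end{itemize}

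So $V(T)=V(T_i)\,\dot\cup\, A_i\,\dot\cup\, B_i$. The only care needed is checking that this partition is exact, i.e.\ that no vertex outside these components is equidistant from $u_i$ and $v_i$. This follows from the uniqueness of paths in a hypertree.

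\textbf{Step 2: eigenequations.} Let $x=x(T)$ and $S=\sum_{w\in V(T)}x_w$. Subtracting the $D(T)$-eigenequation at $v_i$ from the one at $u_i$ gives
\[
\rho(T)(x_{u_i}-x_{v_i})=\sum_{w\in V(T)}\bigl(d_T(w,u_i)-d_T(w,v_i)\bigr)x_w .
\]
By Step 1 the coefficients are $-1$ on $T_i$, $0$ on $A_i$ and $+1$ on $B_i$. Therefore
\[
\rho(T)(x_{u_i}-x_{v_i})=\sigma_T(B_i)-\sigma_T(T_i)=S-2\sigma_T(T_i)-\sigma_T(A_i).
\]

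\textbf{Step 3: subtract.} Subtracting the $i=2$ identity from the $i=1$ identity makes $S$ cancel:
\[
\rho(T)(x_{u_1}-x_{v_1})-\rho(T)(x_{u_2}-x_{v_2})=2\bigl(\sigma_T(T_2)-\sigma_T(T_1)\bigr)+\sigma_T(A_2)-\sigma_T(A_1).
\]
Rearranging the left side as $\rho(T)(x_{u_1}-x_{u_2})-\rho(T)(x_{v_1}-x_{v_2})$ gives exactly the stated formula.
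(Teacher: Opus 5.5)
Your proof is correct. The paper gives no proof of this lemma; it quotes it from the earlier work of Wang and Zhou.

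The route the distance hypothesis is designed for combines all four eigenequations at once. One writes the left-hand side as $\sum_{w}\alpha(w)x_w$ with $\alpha(w)=d_T(w,u_1)-d_T(w,v_1)-d_T(w,u_2)+d_T(w,v_2)$. The hypothesis $d_T(u_1,u_2)=d_T(v_1,v_2)+2$ then shows that the unique path from $u_1$ to $u_2$ is $(u_1,e_1,v_1,\dots,v_2,e_2,u_2)$. Hence $V(T_1)\cup A_1$ lies in the component of $T-e_2$ containing $v_2$, and $V(T_2)\cup A_2$ lies in the component of $T-e_1$ containing $v_1$. This gives $\alpha=-2,2,-1,1,0$ on $V(T_1)$, $V(T_2)$, $A_1$, $A_2$ and the remaining vertices.

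You instead treat each edge separately through the partition $V(T_i)\cup A_i\cup B_i$, and the total sum $S$ cancels on subtraction. So you never need to know how $e_1$ and $e_2$ sit relative to each other. This is cleaner and proves more: the identity holds for any two edges and any $u_i\neq v_i$ in $e_i$, and the distance hypothesis serves only to exclude $u_i=v_i$.

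You assert that last point without proof. It follows from the triangle inequality: $2=d_T(u_1,u_2)-d_T(v_1,v_2)\le d_T(u_1,v_1)+d_T(u_2,v_2)\le 2$, so $d_T(u_i,v_i)=1$ for $i=1,2$. It is worth stating, because if $u_i=v_i$ your Step 1 partition fails. In that case $A_i=V(T)$, and the identity itself is off by $2\sigma_T(T_i)$.
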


\section{Some properties of the distance Perron vector}

In this section, we establish some properties of the distance Perron vector.

\begin{Lemma}  \label{88}
Let $T$ be a hypertree, and $e_1$, $e_2$  be the two edges of $T$ satisfying that  $|e_1|=|e_2|=r$ (where $r\ge 3$).  Suppose that $u_i,v_i \in e_i$ for each $i=1,2$,  $d_T(u_1,u_2)=d_T(v_1,v_2)+2$, and the degree of each vertex in  $e_i\setminus \{u_i,v_i\}$  is one.
For $i=1,2$, let $T_i$ be the component of  $T-e_i$ containing $u_i$.
%and $A_i=\{w\in V(T): d_T(w,u_i)=d_T(w,v_i)\}$. Let $x=x(T)$.
For $w_i\in e_i\setminus\{u_i,v_i\}$ with $i=1,2$,

\[
\rho(T)(x_{u_1}-x_{u_2})-(\rho(T)+1)(x_{w_1}-x_{w_2})=\sigma_{T}(T_2)-\sigma_{T}(T_1)
\]
and
\[
(\rho(T)+1)(x_{w_1}-x_{w_2})-\rho(T)(x_{v_1}-x_{v_2})=(r-2)(x_{w_2}-x_{w_1})+\sigma_{T}(T_2)-\sigma_{T}(T_1).
\]
\end{Lemma}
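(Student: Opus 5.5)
The plan is to subtract $D(T)$-eigenequations at suitable pairs of vertices and cancel whatever does not depend on the pair. We split $V(T)$ according to the positions of $e_1$ and $e_2$. Since $d_T(u_1,u_2)=d_T(v_1,v_2)+2$ and $T$ is a hypertree, the loose path joining $e_1$ to $e_2$ leaves $e_1$ through $v_1$ and enters $e_2$ through $v_2$. Hence $T_1$ (the component of $T-e_1$ containing $u_1$) and $T_2$ (the component of $T-e_2$ containing $u_2$) are disjoint.

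Put $W_i=e_i\setminus\{u_i,v_i\}$ for $i=1,2$; each has $r-2$ vertices, all of degree one. Let $R$ be the set of remaining vertices, i.e.\ those reached from $v_1$ or $v_2$ without passing through $u_1$, $u_2$, or the vertices of $W_1\cup W_2$. Thus $V(T)$ is the disjoint union of $V(T_1)$, $V(T_2)$, $W_1$, $W_2$ and $R$, with $v_1,v_2\in R$.

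\textbf{Step 1: distances.} For a vertex $z$ outside $e_1\cup T_1$:
\[
d(u_1,z)=d(v_1,z)+1,\qquad d(w_1,z)=d(v_1,z)+1.
\]
For $z\in T_1\setminus\{u_1\}$:
\[
d(w_1,z)=d(u_1,z)+1,\qquad d(v_1,z)=d(u_1,z)+1.
\]
Inside $e_1$, all distances between distinct vertices equal $1$. The analogous formulas hold for $e_2$.

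\textbf{Step 2: first identity.} Subtract the eigenequation at $w_1$ from the one at $u_1$, vertex by vertex:
\begin{itemize}
\item for $z\in T_1\setminus\{u_1\}$ the contribution is $-x_z$;
\item for $z=u_1$ it is $-x_{u_1}$;
\item for $z=w_1$ it is $x_{w_1}$;
\item for $z$ in $W_1\setminus\{w_1\}$ and for $z=v_1$ it is $0$;
\item outside $e_1\cup T_1$ it is $0$.
\end{itemize}
This gives
\[
\rho(x_{u_1}-x_{w_1})=-\sigma_T(T_1)+x_{w_1},
\]
that is, $\rho x_{u_1}-(\rho+1)x_{w_1}=-\sigma_T(T_1)$. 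Doing the same at $e_2$ gives $\rho x_{u_2}-(\rho+1)x_{w_2}=-\sigma_T(T_2)$. Subtracting the second from the first yields the first claimed identity.

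\textbf{Step 3: second identity.} Subtract the eigenequation at $v_1$ from the one at $w_1$. The contributions are:
\begin{itemize}
\item $z\in T_1\setminus\{u_1\}$: $-x_z$;
\item $z=u_1$: $0$;
\item $z=w_1$: $-x_{w_1}$;
\item $z\in W_1\setminus\{w_1\}$: $0$;
\item $z=v_1$: $x_{v_1}$;
\item $z$ outside $e_1\cup T_1$: $x_z$.
\end{itemize}
Writing $S$ for the total of all Perron vector entries, the vertices outside $e_1\cup T_1$ together with $v_1$ contribute $S-\sigma_T(T_1)-\sigma_T(W_1)$. Hence
\[
\rho(x_{w_1}-x_{v_1})=S-2\sigma_T(T_1)+x_{u_1}-\sigma_T(W_1)-x_{w_1}.
\]
Since every vertex of $W_1$ has degree one, all of them are images of $w_1$ under automorphisms of $T$. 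By Lemma \ref{automorphism}, $\sigma_T(W_1)=(r-2)x_{w_1}$. So
\[
(\rho+1)x_{w_1}-\rho x_{v_1}=S-2\sigma_T(T_1)+x_{u_1}-(r-2)x_{w_1},
\]
and symmetrically for $e_2$. Subtracting, $S$ cancels and we get
\[
(\rho+1)(x_{w_1}-x_{w_2})-\rho(x_{v_1}-x_{v_2})=(r-2)(x_{w_2}-x_{w_1})+2\bigl(\sigma_T(T_2)-\sigma_T(T_1)\bigr)+(x_{u_1}-x_{u_2}).
\]

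\textbf{Step 4: the main obstacle.} The last display contains the extra term $x_{u_1}-x_{u_2}$ and a factor $2$ on the $\sigma$-difference, so it does not yet match the claim. The key check is whether the $u_1$ contribution in Step 3 is really $0$: since $d(w_1,u_1)=d(v_1,u_1)=1$, it is. So the remaining mismatch must be resolved by rewriting. From Step 2, $x_{u_1}-x_{u_2}$ can be expressed through $(\rho+1)(x_{w_1}-x_{w_2})$ and $\sigma_T(T_2)-\sigma_T(T_1)$. An alternative check is to add the two claimed identities, compare with Lemma \ref{66} with $A_i=W_i$, and substitute.

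If this reconciliation fails, the statement is probably intended with $T_1$ and $T_2$ including the vertices $u_1$ and $u_2$ counted differently. I will recheck the bookkeeping of which sets contain $u_1$ and $u_2$ against Lemma \ref{66}, which is the consistency test I will rely on.
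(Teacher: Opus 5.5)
Your Step 2 is correct and is exactly the paper's derivation of the first identity. The second identity does not go through, however, because the first bullet of Step 3 is wrong. By your own Step 1, for $z\in V(T_1)\setminus\{u_1\}$ you have $d_T(w_1,z)=d_T(u_1,z)+1=d_T(v_1,z)$. So such $z$ contribute $0$ to $\rho(x_{w_1}-x_{v_1})$, not $-x_z$.

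That spurious term $-(\sigma_T(T_1)-x_{u_1})$ is exactly what produces the factor $2$ and the extra $x_{u_1}-x_{u_2}$ in your final display. In Step 4 you re-examined the $u_1$ bullet, which was already correct.

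The proposed ``reconciliation by rewriting'' cannot work, because your displayed relation is false. It differs from the true identity by $(\sigma_T(T_2)-x_{u_2})-(\sigma_T(T_1)-x_{u_1})$. This is strictly positive, for instance, when $u_1$ has degree one and $u_2$ does not, since $x$ is positive. Substituting the first identity for $x_{u_1}-x_{u_2}$ cannot remove that discrepancy. Your fallback suggestion that the statement is mis-stated is also unfounded: it is correct as written, with $u_i\in V(T_i)$.

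With the bullet corrected, Step 3 gives $(\rho+1)x_{w_1}-\rho x_{v_1}=S-\sigma_T(T_1)-(r-2)x_{w_1}$. Subtracting the analogous relation for $e_2$ cancels $S$ and gives the second identity directly, so Step 4 is unnecessary.

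Alternatively, the consistency check you mention at the end is itself a proof. Apply Lemma \ref{66} to $u_i,v_i$; there $A_i=e_i\setminus\{u_i,v_i\}$, so $\sigma_T(A_i)=(r-2)x_{w_i}$. This gives $\rho(x_{u_1}-x_{u_2})-\rho(x_{v_1}-x_{v_2})=2(\sigma_T(T_2)-\sigma_T(T_1))+(r-2)(x_{w_2}-x_{w_1})$. Subtracting the first identity yields the second.

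The paper takes a third route: it applies Lemma \ref{66} to the pairs $w_i,v_i$. There the relevant component of $T-e_i$ is $\{w_i\}$ and $A_i=V(T_i)\cup(e_i\setminus\{u_i,v_i,w_i\})$. Your corrected direct computation is the most elementary of these. It also shows that the hypothesis $d_T(u_1,u_2)=d_T(v_1,v_2)+2$ is not needed for the identities, since each is a difference of two single-edge relations.
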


\begin{proof}
From  the $D(T)$-eigenequations at $u_1$ and  $w_1$, we have
\[
\rho(T)(x_{u_1}-x_{w_1})=x_{w_1}-\sigma_{T}(T_1).
\]
From  the $D(T)$-eigenequations at $w_2$ and $u_2$, we have
\[
\rho(T)(x_{w_2}-x_{u_2})=\sigma_{T}(T_2)-x_{w_2}.
\]
So
\[
\rho(T)(x_{u_1}-x_{u_2})-\rho(T)(x_{w_1}-x_{w_2})=x_{w_1}-x_{w_2}+\sigma_{T}(T_2)-\sigma_{T}(T_1),
 \]
from which the first equation follows.

By Lemma \ref{automorphism}, $x_{w}=x_{w_i}$ for $w\in e_i\backslash \{u_i,v_i,w_i\}$ with $i=1,2$ (if any exists).
Obviously, $d_T(w_1,w_2)=d_T(v_1,v_2)+2$, the component of  $T-e_i$ containing $w_i$ has a single vertex $w_i$ and $\{w\in V(T): d_T(w,w_i)=d_T(w,v_i)\}=V(T_i)\cup (e_i\backslash \{u_i,v_i,w_i\})$ for $i=1,2$.
By Lemma \ref{66},
\[
\rho(T)(x_{w_1}-x_{w_2})-\rho(T)(x_{v_1}-x_{v_2})=2(x_{w_2}-x_{w_1})+\sigma_{T}(T_2)-\sigma_{T}(T_1)+(r-3)(x_{w_2}-x_{w_1}),
\]
from which the second equation  follows.
\end{proof}

%\begin{Lemma}  \label{pendant1}
%\cite{WZ4}
% For $\ell \geq1$, let $G$ be a connected graph with two vertex-disjoint pendant paths, say $P'=u_1\cdots u_\ell u_{\ell +1}$ and $P''=v_1\cdots v_\ell v_{\ell+1}$  at $u_{\ell +1}$ and $v_{\ell+1}$, respectively. Let $x=x(G)$.
%\item[(i)]
%$x_{u_i}-x_{v_i}$ and $x_{u_{i+1}}-x_{v_{i+1}}$ have the same sign for $i=1, \dots, \ell$;
%\item[(ii)]
%If $x_{u_{\ell+1}}>x_{w_{\ell+1}}$, then $x_{u_i}-x_{v_i}<x_{u_{i+1}}-x_{v_{i+1}}$ for $i=1, \dots, \ell$.
%\end {Lemma}

\begin{Lemma}  \label{pendant}
 For $r\geq3$  and $\ell \geq 1$, let $T$ be a  hypertree with two vertex-disjoint $r$-uniform pendant paths of the same length, say $P'=(u_1, e'_1,u_2,\dots,  u_{\ell}, e'_{\ell}, u_{\ell+1})$ and $P''=(v_1, e''_1,v_2, \dots,  v_\ell, e''_{\ell}, v_{\ell+1})$  at $u_{\ell+1}$ and $v_{\ell+1}$, respectively. Suppose that $w'_i \in {e'_i}$ and $w''_i \in {e''_i}$ for $i=1,\dots,\ell$.  Let $x=x(T)$.
Then
\begin{conditions}
\item
$x_{u_i}-x_{v_i}$, $x_{w'_i}-x_{w''_i}$  and $x_{u_{i+1}}-x_{v_{i+1}}$ have the same sign for $i=1, \dots, \ell$.
\item
If $x_{u_{\ell+1}}>x_{v_{\ell+1}}$, then $x_{u_i}-x_{v_i}<x_{u_{i+1}}-x_{v_{i+1}}$ and $x_{w'_i}-x_{w''_i}<x_{u_{i+1}}-x_{v_{i+1}}$  for $i=1, \dots, \ell$.
\end{conditions}
\end {Lemma}

\begin{proof}
We prove Item (i)  by induction on $i$.

For $i=1$,
by Lemma \ref{88},
we have
\[
\rho(T)(x_{u_1}-x_{v_1})-(\rho(T)+1)(x_{w'_{1}}-x_{w''_{1}})=x_{v_{1}}-x_{u_1},
\]
i.e.,
\[
(\rho(T)+1)(x_{u_1}-x_{v_1})=(\rho(T)+1)(x_{w'_{1}}-x_{w''_{1}}),
 \]
so $x_{u_1}-x_{v_1}$ and $x_{w'_{1}}-x_{w''_{1}}$ have the same sign.
On the other hand, by Lemma \ref{88} again,  we have
\[
(\rho(T)+1)(x_{w'_{1}}-x_{w''_{1}})-\rho(T)(x_{u_2}-x_{v_2})=(r-2)(x_{w''_{1}}-x_{w'_{1}})+x_{v_{1}}-x_{u_1},
\]
i.e.,
\[
\rho(T)(x_{u_2}-x_{v_2})=(\rho(T)+r-1)(x_{w'_{1}}-x_{w''_{1}})+x_{u_{1}}-x_{v_1},
  \]
so $x_{w'_{1}}-x_{w''_{1}}$  and $x_{u_2}-x_{v_2}$ have the same sign.

Suppose that $i\geq 2$,  $x_{u_j}-x_{v_j}$, $x_{w'_j}-x_{w''_j}$  and $x_{u_{j+1}}-x_{v_{j+1}}$ have the same sign for $j=1, \dots, i-1$.
By Lemma \ref{88},
we have
\begin{align*}
&\quad \rho(T)\left(x_{u_{i}}-x_{v_{i}}\right)-(\rho(T)+1)\left(x_{w'_{i}}-x_{w''_{i}}\right)\\
&= \left(\sum_{j=1}^{i-1}\left(x_{v_{j}}+(r-2)x_{w''_{j}}\right)+x_{v_{i}}\right)-\left(\sum_{j=1}^{i-1}\left(x_{u_{j}}+(r-2)x_{w'_{j}}\right)+x_{u_{i}}\right)\\
&= \left(\sum_{j=1}^{i}x_{v_{j}}-\sum_{j=1}^{i}x_{u_{j}}\right)+(r-2)\left(\sum_{j=1}^{i-1}x_{w''_{j}}-\sum_{j=1}^{i-1}x_{w'_{j}}\right),
\end{align*}
so
\begin{align*}
(\rho(T)+1)\left(x_{w'_{i}}-x_{w''_{i}}\right)&=\rho(T)\left(x_{u_{i}}-x_{v_{i}}\right)
+\sum_{j=1}^{i}(x_{u_{j}}-x_{v_{j}})\\
&\quad +(r-2)\sum_{j=1}^{i-1}(x_{w'_{j}}-x_{w''_{j}}).
\end{align*}
As $x_{u_{i}}-x_{v_{i}}$,   $\sum_{j=1}^{i}(x_{u_{j}}-x_{v_{j}})$ and  $\sum_{j=1}^{i-1}(x_{w'_{j}}-x_{w''_{j}})$ have the same sign,
 $x_{u_i}-x_{v_i}$ and $x_{w'_{i}}-x_{w''_{i}}$ have the same sign.
On the other hand, by Lemma \ref{88} again,  we have
\begin{align*}
&\quad (\rho(T)+1)\left(x_{w'_{i}}-x_{w''_{i}}\right)-\rho(T)\left(x_{u_{i+1}}-x_{v_{i+1}}\right)\\
&= (r-2)(x_{w''_{i}}-x_{w'_{i}})+\left(\sum_{j=1}^{i-1}\left(x_{v_{j}}+(r-2)x_{w''_{j}}\right)+x_{v_{i}}\right)-\left(\sum_{j=1}^{i-1}\left(x_{u_{j}}+(r-2)x_{w'_{j}}\right)+x_{u_{i}}\right)\\
%\sum_{j=1}^{i}\left(x_{v_{j}}+(r-2)x_{w''_{j}}\right)-\sum_{j=1}^{i}\left(x_{u_{j}}+(r-2)x_{w'_{j}}\right)\\
&= (r-2)(x_{w''_{i}}-x_{w'_{i}})+\left(\sum_{j=1}^{i}x_{v_{j}}-\sum_{j=1}^{i}x_{u_{j}}\right)+(r-2)\left(\sum_{j=1}^{i-1}x_{w''_{j}}-\sum_{j=1}^{i-1}x_{w'_{j}}\right),
\end{align*}
so \begin{align*}
\rho(T)\left(x_{u_{i+1}}-x_{v_{i+1}}\right)&=(\rho(T)+r-1)(x_{w'_{i}}
-x_{w''_{i}})+\sum_{j=1}^{i}(x_{u_{j}}-x_{v_{j}})\\
&\quad +(r-2)\sum_{j=1}^{i-1}(x_{w'_{j}}-x_{w''_{j}}).
\end{align*}
As above, $x_{w'_{i}}-x_{w''_{i}}$ and $x_{u_{i+1}}-x_{v_{i+1}}$ have the same sign.
Therefore, $x_{u_i}-x_{v_i}$, $x_{w'_i}-x_{w''_i}$  and $x_{u_{i+1}}-x_{v_{i+1}}$ have the same sign for $i=1, \dots, \ell$, Item (i) follows.

Suppose that $x_{u_{\ell+1}}>x_{v_{\ell+1}}$. From Item (i), we have $x_{u_{i}}>x_{v_{i}}$ and $x_{w'_{i}}>x_{w''_{i}}$ for $i=1, \dots, \ell$.
For $i=1, \dots, \ell$,
from the $D(T)$-eigenequations at $u_i$, $v_{i}$, $w'_{i}$, $w''_{i}$, $u_{i+1}$ and $v_{i+1}$,
we have
\begin{align*}
&\quad \rho(T)\left(x_{u_{i}}-x_{v_{i}}\right)-(\rho(T)+1)\left(x_{w'_{i}}-x_{w''_{i}}\right)\\
&=\left(\sum_{j=1}^{i-1}\left(x_{v_{j}}+(r-2)x_{w''_{j}}\right)+x_{v_{i}}\right)-\left(\sum_{j=1}^{i-1}\left(x_{u_{j}}+(r-2)x_{w'_{j}}\right)+x_{u_{i}}\right)\\
%\sum_{j=1}^{i}\left(x_{v_{j}}+(r-2)x_{w''_{j}}\right)-\sum_{j=1}^{i}\left(x_{u_{j}}+(r-2)x_{w'_{j}}\right)\\
&= \sum_{j=1}^{i}\left(x_{v_{j}}-x_{u_{j}}\right)+(r-2)\sum_{j=1}^{i-1}(x_{w''_{j}}-x_{w'_{j}})\\
&< 0,
\end{align*}
and
\begin{align*}
&\quad (\rho(T)+1)\left(x_{w'_{i}}-x_{w''_{i}}\right)-\rho(T)\left(x_{u_{i+1}}-x_{v_{i+1}}\right)\\
&= (r-2)(x_{w''_{i}}-x_{w'_{i}})+\left(\sum_{j=1}^{i-1}\left(x_{v_{j}}+(r-2)x_{w''_{j}}\right)+x_{v_{i}}\right)-\left(\sum_{j=1}^{i-1}\left(x_{u_{j}}+(r-2)x_{w'_{j}}\right)+x_{u_{i}}\right)\\
%+\sum_{j=1}^{i}\left(x_{v_{j}}+(r-2)x_{w''_{j}}\right)-\sum_{j=1}^{i}\left(x_{u_{j}}+(r-2)x_{w'_{j}}\right)\\
&= (r-2)(x_{w''_{i}}-x_{w'_{i}})+\sum_{j=1}^{i}\left(x_{v_{j}}-x_{u_{j}}\right)+(r-2)\sum_{j=1}^{i-1}(x_{w''_{j}}-x_{w'_{j}})\\
&< 0,
\end{align*}
so $\rho(T)\left(x_{u_{i}}-x_{v_{i}}\right)<(\rho(T)+1)\left(x_{w'_{i}}-x_{w''_{i}}\right)<\rho(T)\left(x_{u_{i+1}}-x_{v_{i+1}}\right)$. This proves Item (ii).
\end{proof}

\begin{Lemma}  \label{77}
Let $T$ be the hypertree obtained from vertex disjoint
%nontrivial
 hypertrees $T_1$ and $T_2$ with $u\in V(T_1)$ and $v\in V(T_2)$ by adding an $r$-uniform   path $P_{t-1,r}=(v_1, e_1,v_2, \dots,  v_{t-1}, e_{t-1}, v_{t})$ with $v_1=u$ and $v_{t}=v$, where  $t\ge 2$, $r\ge 3$, $V(T_1)\cap V(P_{t-1,r})=\{v_1\}$ and $V(T_2)\cap V(P_{t-1,r})=\{v_t\}$. Suppose that $w_i \in {e_i}$  for $i=1,\dots,t-1$.  Let $x=x(T)$.

\begin{conditions}
\item
If $\sigma_{T}(T_1)-x_u=\sigma_{T}(T_2)-x_v$, then $x_{v_i}=x_{v_{t+1-i}}$ for $i=1,\dots, \left\lfloor\frac{t}{2}\right\rfloor$, and $x_{w_i}=x_{w_{t-i}}$ for $i=1,\dots, \left\lfloor\frac{t-1}{2}\right\rfloor$ if $t\geq 3$.
\item
If $\sigma_{T}(T_1)-x_u<\sigma_{T}(T_2)-x_v$, then
$x_{v_i}>x_{v_{t+1-i}}$ for $i=1,\dots, \left\lfloor\frac{t}{2}\right\rfloor$,   $x_{w_i}>x_{w_{t-i}}$ for $i=1,\dots, \left\lfloor\frac{t-1}{2}\right\rfloor$ if $t\geq 3$,
and
$x_{v_i}-x_{v_{t+1-i}}>x_{v_{i+1}}-x_{v_{t+1-(i+1)}}$ for $i=1,\dots, \left\lfloor\frac{t}{2}\right\rfloor-1$, $x_{v_i}-x_{v_{t+1-i}}>x_{w_{i}}-x_{w_{t-i}}$ for $i=1,\dots, \left\lfloor\frac{t-1}{2}\right\rfloor$.
\end{conditions}
\end {Lemma}

\begin{proof}
The case when $t=2$ in Item (i) follows obviously from the  $D(T)$-eigenequations at $v_1$ and $v_{2}$.

Suppose that $t\geq 3$.

\noindent
{\bf Case 1.} $t$ is even.

Let $p= \frac{t}{2}$. Let $S_1=\sigma_{T}(T_1)+\sum_{j=2}^{p}(x_{v_{j}}+(r-2)x_{w_{j-1}})$ and $S_2=\sigma_{T}(T_2)+\sum_{j=p+1}^{t-1}(x_{v_{j}}+(r-2)x_{w_{j}})$.
We claim that $x_{v_{p-i}}-x_{v_{p+1+i}}$ and $S_2-S_1$ have the same sign for $0\leq i\leq p-1$, $x_{w_{p-1-i}}-x_{w_{p+1+i}}$  and $S_2-S_1$ have the same sign for $0\leq i\leq p-2$. We prove this by induction on $i$.

For $i=0$, from the $D(T)$-eigenequations at $v_p$ and $v_{p+1}$, we have
$\rho(T)(x_{v_p}-x_{v_{p+1}})=S_2-S_1$, so $x_{v_p}-x_{v_{p+1}}$ and $S_2-S_1$ have the same sign. On the other hand, by Lemma \ref{88},
\[
(\rho(T)+1)(x_{w_{p-1}}-x_{w_{p+1}})-\rho(T)(x_{v_p}-x_{v_{p+1}})=S_2-S_1,
\]
so $(\rho(T)+1)(x_{w_{p-1}}-x_{w_{p+1}})=\rho(T)(x_{v_p}-x_{v_{p+1}})+(S_2-S_1)$, and thus $x_{w_{p-1}}-x_{w_{p+1}}$ and $S_2-S_1$ have the same sign.

%Suppose that $1\leq i \leq p-1$, and $x_{v_{p-j}}-x_{v_{p+1+j}}$ and $S_2-S_1$ have the same sign for $0\leq j\leq i-1$, and suppose that $1\leq i \leq p-2$, and $x_{w_{p-1-j}}-x_{w_{p+1+j}}$ and $S_2-S_1$ have the same sign for $0\leq j\leq i-1$.
Suppose that $1\leq i \leq p-2$,  $x_{v_{p-j}}-x_{v_{p+1+j}}$, $x_{w_{p-1-j}}-x_{w_{p+1+j}}$  and $S_2-S_1$ have the same sign for $0\leq j\leq i-1$.
By Lemma \ref{88}, we have
\begin{align*}
&\quad \rho(T)\left(x_{v_{p-i}}-x_{v_{p+1+i}}\right)-(\rho(T)+1)\left(x_{w_{p-1-(i-1)}}-x_{w_{p+1+(i-1)}}\right)\\
&= \left(\sigma_{T}(T_2)+\sum_{j=p+1+i}^{t-1}(x_{v_{j}}+(r-2)x_{w_{j}})\right)
-\left(\sigma_{T}(T_1)+\sum_{j=2}^{p-i}(x_{v_{j}}+(r-2)x_{w_{j-1}})\right)\\
&= \left(S_2-\sum_{j=0}^{i-1}\left(x_{v_{p+1+j}}+(r-2)x_{w_{p+1+j}}\right)\right)-\left(S_1-\sum_{j=0}^{i-1}(x_{v_{p-j}}+(r-2)x_{w_{p-1-j}})\right),
\end{align*}
so
\begin{align*}
\rho(T)\left(x_{v_{p-i}}-x_{v_{p+1+i}}\right)&= (S_2-S_1)\\
&\quad +\sum_{j=0}^{i-1}\left((x_{v_{p-j}}-x_{v_{p+1+j}})+(r-2)(x_{w_{p-1-j}}-x_{w_{p+1+j}})
\right)\\
&\quad +(\rho(T)+1)\left(x_{w_{p-1-(i-1)}}-x_{w_{p+1+(i-1)}}\right),
\end{align*}
and
thus $x_{v_{p-i}}-x_{v_{p+1+i}}$ and $S_2-S_1$ have the same sign. On the other hand, by Lemma \ref{88} again,
\begin{align*}
&\quad (\rho(T)+1)\left(x_{w_{p-1-i}}-x_{w_{p+1+i}}\right)-\rho(T)\left(x_{v_{p-i}}-x_{v_{p+1+i}}\right)\\
&= (r-2)(x_{w_{p+1+i}}-x_{w_{p-1-i}})+\left(\sigma_{T}(T_2)+\sum_{j=p+1+i}^{t-1}(x_{v_{j}}+(r-2)x_{w_{j}})\right)\\
&\quad -\left(\sigma_{T}(T_1)+\sum_{j=2}^{p-i}(x_{v_{j}}+(r-2)x_{w_{j-1}})\right)\\
&= (r-2)(x_{w_{p+1+i}}-x_{w_{p-1-i}})+\left(S_2-\sum_{j=0}^{i-1}\left(x_{v_{p+1+j}}+(r-2)x_{w_{p+1+j}}\right)\right)\\
&\quad -\left(S_1-\sum_{j=0}^{i-1}(x_{v_{p-j}}+(r-2)x_{w_{p-1-j}})\right),
\end{align*}
so
\begin{align*}
&\quad (\rho(T)+r-1)\left(x_{w_{p-1-i}}-x_{w_{p+1+i}}\right)\\
&= (S_2-S_1)+\sum_{j=0}^{i-1}\left((x_{v_{p-j}}-x_{v_{p+1+j}})+(r-2)(x_{w_{p-1-j}}-x_{w_{p+1+j}})
\right)\\
&\quad +\rho(T)\left(x_{v_{p-i}}-x_{v_{p+1+i}}\right),
\end{align*}
and
thus $x_{w_{p-1-i}}-x_{w_{p+1+i}}$ and $S_2-S_1$ have the same sign.

%Now we have showed  that $x_{v_{p-i}}-x_{v_{p+1-i}}$ and $S_2-S_1$ have the same sign for $0\leq i\leq p-1$, $x_{w_{p-1-i}}-x_{w_{p+1+i}}$  and $S_2-S_1$ have the same sign for $0\leq i\leq p-2$. as claimed.

Now we have showed  that $x_{v_{p-i}}-x_{v_{p+1-i}}$, $x_{w_{p-1-i}}-x_{w_{p+1+i}}$  and $S_2-S_1$ have the same sign for $0\leq i\leq p-2$. For the remaining case with $i=p-1$,  by Lemma \ref{88} again,
\begin{align*}
&\quad \rho(T)\left(x_{v_{1}}-x_{v_{2p}}\right)-(\rho(T)+1)\left(x_{w_{1}}-x_{w_{2p-1}}\right)\\
&= \sigma_{T}(T_2)-\sigma_{T}(T_1)\\
&= \left(S_2-\sum_{j=0}^{p-2}\left(x_{v_{p+1+j}}+(r-2)x_{w_{p+1+j}}\right)\right)-\left(S_1-\sum_{j=0}^{p-2}(x_{v_{p-j}}+(r-2)x_{w_{p-1-j}})\right),
\end{align*}
so
\begin{align*}
\rho(T)\left(x_{v_{p-i}}-x_{v_{p+1+i}}\right)&= (S_2-S_1)\\
&\quad +\sum_{j=0}^{p-2}\left((x_{v_{p-j}}-x_{v_{p+1+j}})+(r-2)(x_{w_{p-1-j}}-x_{w_{p+1+j}})
\right)\\
&\quad +(\rho(T)+1)\left(x_{w_{1}}-x_{w_{2p-1}}\right),
\end{align*}
and
thus $x_{v_{p-i}}-x_{v_{p+1+i}}$ and $S_2-S_1$ have the same sign, as claimed.

Suppose first that $\sigma_{T}(T_1)-x_u=\sigma_{T}(T_2)-x_v$.
Note that
\begin{align*}
S_2-S_1&= \sigma_{T}(T_2)-x_v+\sum_{i=0}^{p-1}x_{v_{p+1+i}}+\sum_{i=0}^{p-2}(r-2)x_{w_{p+1+i}}\\
&\quad -\left((\sigma_{T}(T_1)-x_u)+\sum_{i=0}^{p-1}x_{v_{p-i}}+\sum_{i=0}^{p-2}(r-2)x_{w_{p-1-i}}\right)\\
&=  -\sum_{i=0}^{p-1}\left(x_{v_{p-i}}-x_{v_{p+1+i}}\right)-(r-2)\sum_{i=0}^{p-2}\left(x_{w_{p-1-i}}-x_{w_{p+1+i}}\right).
\end{align*}
This requires the common sign in the above claim to be $0$. Thus $x_{v_{p-i}}=x_{v_{p+1-i}}$ for $0\leq i\leq p-1$, i.e., $x_{v_i}=x_{v_{t+1-i}}$ for $i=1,\dots, \lfloor\frac{t}{2}\rfloor$, $x_{w_{p-1-i}}=x_{w_{p+1+i}}$  for $0\leq i\leq p-2$, i.e.,  $x_{w_i}=x_{w_{t-i}}$ for $i=1,\dots, \left\lfloor\frac{t-1}{2}\right\rfloor$ if $t\geq 3$. This proves Item (i).

Suppose next that $\sigma_{T}(T_1)-x_u<\sigma_{T}(T_2)-x_v$.

Note that
\begin{align*}
S_2-S_1&= \sigma_{T}(T_2)-x_v+\sum_{i=0}^{p-1}x_{v_{p+1+i}}+\sum_{i=0}^{p-2}(r-2)x_{w_{p+1+i}}\\
&\quad -\left((\sigma_{T}(T_1)-x_u)+\sum_{i=0}^{p-1}x_{v_{p-i}}+\sum_{i=0}^{p-2}(r-2)x_{w_{p-1-i}}\right)\\
&> -\sum_{i=0}^{p-1}\left(x_{v_{p-i}}-x_{v_{p+1+i}}\right)-(r-2)\sum_{i=0}^{p-2}\left(x_{w_{p-1-i}}-x_{w_{p+1+i}}\right).
\end{align*}
This requires the common sign in the above claim  to be $+$. Thus $x_{v_{p-i}}>x_{v_{p+1-i}}$ for $0\leq i\leq p-1$, i.e., $x_{v_i}>x_{v_{t+1-i}}$ for $i=1,\dots, \lfloor\frac{t}{2}\rfloor$, $x_{w_{p-1-i}}>x_{w_{p+1+i}}$  for $0\leq i\leq p-2$, i.e.,  $x_{w_i}=x_{w_{t-i}}$ for $i=1,\dots, \left\lfloor\frac{t-1}{2}\right\rfloor$ if $t\geq 3$. This proves the first part of  Item (ii).

\noindent
{\bf Case 2.} $t$ is odd.

Let $p= \frac{t-1}{2}$. Let $S_1=\sigma_{T}(T_1)+(r-2)x_{w_{1}}+\sum_{j=2}^{p}(x_{v_{j}}+(r-2)x_{w_{j}})$ and $S_2=\sigma_{T}(T_2)+(r-2)x_{w_{p+1}}+\sum_{j=p+2}^{t-1}(x_{v_{j}}+(r-2)x_{w_{j}})$.
We claim that $x_{w_{p-i}}-x_{w_{p+1+i}}$, $x_{v_{p-i}}-x_{v_{p+2+i}}$ and $S_2-S_1$ have the same sign for $0\leq i\leq p-1$.
 %$x_{v_{p-i}}-x_{v_{p+2+i}}$ and $S_2-S_1$ have the same sign for $0\leq i\leq p-1$, $x_{w_{p-i}}-x_{w_{p+1+i}}$  and $S_2-S_1$ have the same sign for $0\leq i\leq p-1$.
 We prove this by induction on $i$.

For $i=0$, from the $D(T)$-eigenequations at $w_p$ and $w_{p+1}$, we have
$(\rho(T)+1)(x_{w_p}-x_{w_{p+1}})=S_2-S_1$, so $x_{w_p}-x_{w_{p+1}}$ and $S_2-S_1$ have the same sign. On the other hand, by Lemma \ref{88},
\[
\rho(T)(x_{v_p}-x_{v_{p+2}})-(\rho(T)+1)(x_{w_{p}}-x_{w_{p+1}})=S_2-(r-2)x_{w_{p+1}}-(S_1-(r-2)x_{w_{p}}),
\]
so $\rho(T)(x_{v_p}-x_{v_{p+2}})=(S_2-S_1)+(\rho(T)+r-1)(x_{w_{p}}-x_{w_{p+1}})$, and thus $x_{v_{p}}-x_{v_{p+2}}$ and $S_2-S_1$ have the same sign.

%Suppose that $1\leq i \leq p-1$, and $x_{v_{p-j}}-x_{v_{p+2+j}}$ and $S_2-S_1$ have the same sign for $0\leq j\leq i-1$, and suppose that $1\leq i \leq p-1$, and $x_{w_{p-j}}-x_{w_{p+1+j}}$ and $S_2-S_1$ have the same sign for $0\leq j\leq i-1$.
Suppose that $1\leq i \leq p-1$, $x_{w_{p-j}}-x_{w_{p+1+j}}$, $x_{v_{p-j}}-x_{v_{p+2+j}}$ and $S_2-S_1$ have the same sign for $0\leq j\leq i-1$.
By Lemma \ref{88}, we have
\begin{align*}
&\quad (\rho(T)+1)\left(x_{w_{p-i}}-x_{w_{p+1+i}}\right)-\rho(T)\left(x_{v_{p-(i-1)}}-x_{v_{p+2+(i-1)}}\right)\\
&= (r-2)(x_{w_{p+1+i}}-x_{w_{p-i}})+\left(\sigma_{T}(T_2)+\sum_{j=p+2+i}^{t-1}(x_{v_{j}}+(r-2)x_{w_{j}})\right)\\
&\quad -\left(\sigma_{T}(T_1)+\sum_{j=2}^{p-i}(x_{v_{j}}+(r-2)x_{w_{j-1}})\right)\\
&= (r-2)(x_{w_{p+1+i}}-x_{w_{p-i}})+\left(S_2-\sum_{j=0}^{i-1}x_{v_{p+2+j}}-(r-2)\sum_{j=0}^{i}x_{w_{p+1+j}}\right)\\
&\quad -\left(S_1-\sum_{j=0}^{i-1}x_{v_{p-j}}-(r-2)\sum_{j=0}^{i}x_{w_{p-j}}\right),
\end{align*}
so
\begin{align*}
&\quad (\rho(T)+r-1)\left(x_{w_{p-i}}-x_{w_{p+1+i}}\right)\\
&= (S_2-S_1)+\sum_{j=0}^{i-1}(x_{v_{p-j}}-x_{v_{p+2+j}})+(r-2)\sum_{j=0}^{i-1}(x_{w_{p-j}}-x_{w_{p+1+j}})\\
&\quad +\rho(T)\left(x_{v_{p-(i-1)}}-x_{v_{p+2+(i-1)}}\right),
\end{align*}
and
thus $x_{w_{p-i}}-x_{w_{p+1+i}}$ and $S_2-S_1$ have the same sign.
On the other hand, by Lemma \ref{88} again,
\begin{align*}
&\quad \rho(T)\left(x_{v_{p-i}}-x_{v_{p+2+i}}\right)-(\rho(T)+1)\left(x_{w_{p-i}}-x_{w_{p+1+i}}\right)\\
&= \left(\sigma_{T}(T_2)+\sum_{j=p+2+i}^{t-1}(x_{v_{j}}+(r-2)x_{w_{j}})\right)
-\left(\sigma_{T}(T_1)+\sum_{j=2}^{p-i}(x_{v_{j}}+(r-2)x_{w_{j-1}})\right)\\
&= \left(S_2-\sum_{j=0}^{i-1}x_{v_{p+2+j}}-(r-2)\sum_{j=0}^{i}x_{w_{p+1+j}}\right)-\left(S_1-\sum_{j=0}^{i-1}x_{v_{p-j}}-(r-2)\sum_{j=0}^{i}x_{w_{p-j}}\right),
\end{align*}
so
\begin{align*}
\rho(T)\left(x_{v_{p-i}}-x_{v_{p+2+i}}\right)&= (S_2-S_1)\\
&\quad +\sum_{j=0}^{i-1}(x_{v_{p-j}}-x_{v_{p+2+j}})+(r-2)\sum_{j=0}^{i}(x_{w_{p-j}}-x_{w_{p+1+j}})\\
&\quad +(\rho(T)+1)\left(x_{w_{p-i}}-x_{w_{p+1+i}}\right),
\end{align*}
and
thus $x_{v_{p-i}}-x_{v_{p+1+i}}$ and $S_2-S_1$ have the same sign.

Now we have showed  that $x_{w_{p-i}}-x_{w_{p+1+i}}$, $x_{v_{p-i}}-x_{v_{p+2+i}}$ and $S_2-S_1$ have the same sign for $0\leq i\leq p-1$,
% $x_{v_{p-i}}-x_{v_{p+2-i}}$ and $S_2-S_1$ have the same sign for $0\leq i\leq p-1$, $x_{w_{p-i}}-x_{w_{p+1+i}}$  and $S_2-S_1$ have the same sign for $0\leq i\leq p-1$.
as claimed.

Suppose first that $\sigma_{T}(T_1)-x_u=\sigma_{T}(T_2)-x_v$.
Note that
\begin{align*}
S_2-S_1&= \sigma_{T}(T_2)-x_v+\sum_{i=0}^{p-1}x_{v_{p+2+i}}+\sum_{i=0}^{p-1}(r-2)x_{w_{p+1+i}}\\
&\quad -\left((\sigma_{T}(T_1)-x_u)+\sum_{i=0}^{p-1}x_{v_{p-i}}+\sum_{i=0}^{p-1}(r-2)x_{w_{p-i}}\right)\\
&=  -\sum_{i=0}^{p-1}\left(x_{v_{p-i}}-x_{v_{p+2+i}}\right)-(r-2)\sum_{i=0}^{p-1}\left(x_{w_{p-i}}-x_{w_{p+1+i}}\right).
\end{align*}
This requires the above common sign in the above claim to be $0$. Thus $x_{v_{p-i}}=x_{v_{p+2-i}}$ for $0\leq i\leq p-1$, i.e., $x_{v_i}=x_{v_{t+1-i}}$ for $i=1,\dots, \lfloor\frac{t}{2}\rfloor$, $x_{w_{p-i}}=x_{w_{p+1+i}}$  for $0\leq i\leq p-2$, i.e.,  $x_{w_i}=x_{w_{t-i}}$ for $i=1,\dots, \left\lfloor\frac{t-1}{2}\right\rfloor$ if $t\geq 3$. This proves Item (i).

Suppose next that $\sigma_{T}(T_1)-x_u<\sigma_{T}(T_2)-x_v$.

Note that
\begin{align*}
S_2-S_1&= \sigma_{T}(T_2)-x_v+\sum_{i=0}^{p-1}x_{v_{p+2+i}}+\sum_{i=0}^{p-1}(r-2)x_{w_{p+1+i}}\\
&\quad -\left((\sigma_{T}(T_1)-x_u)+\sum_{i=0}^{p-1}x_{v_{p-i}}+\sum_{i=0}^{p-1}(r-2)x_{w_{p-i}}\right)\\
&> -\sum_{i=0}^{p-1}\left(x_{v_{p-i}}-x_{v_{p+2+i}}\right)-(r-2)\sum_{i=0}^{p-1}\left(x_{w_{p-i}}-x_{w_{p+1+i}}\right).
\end{align*}
This requires the above common sign in the above claim to be $+$. Thus $x_{v_{p-i}}>x_{v_{p+2-i}}$ for $0\leq i\leq p-1$, i.e., $x_{v_i}>x_{v_{t+1-i}}$ for $i=1,\dots, \lfloor\frac{t}{2}\rfloor$, $x_{w_{p-i}}>x_{w_{p+1+i}}$  for $0\leq i\leq p-1$, i.e.,  $x_{w_i}=x_{w_{t-i}}$ for $i=1,\dots, \left\lfloor\frac{t-1}{2}\right\rfloor$ if $t\geq 3$. This proves the first part of  Item (ii).

In the following, we prove  the second part of  Item (ii).
By above proof, we have $S_2-S_1> 0$ in either cases.
For $i=1,\dots, \lfloor\frac{t-1}{2}\rfloor$, by Lemma \ref{88}, we have
\begin{align*}
&\quad \rho(T)\left(x_{v_{i}}-x_{v_{t+1-i}}\right)-(\rho(T)+1)\left(x_{w_{i}}-x_{w_{t-i}}\right)\\
&= \sigma_{T}(T_2)-x_v+\sum_{j=1}^{i}x_{v_{t+1-j}}+\sum_{j=1}^{i-1}(r-2)x_{w_{t-j}}\\
&\quad -\left(\sigma_{T}(T_1)-x_u+\sum_{j=1}^{i}x_{v_{j}}+\sum_{j=1}^{i-1}(r-2)x_{w_{j}}\right)\\
&= (\sigma_{T}(T_2)-x_v-\sigma_{T}(T_1)+x_u)+\sum_{j=1}^{i}(x_{v_{t+1-j}}-x_{v_{j}})+(r-2)\sum_{j=1}^{i-1}(x_{w_{t-j}}-x_{w_{j}})\\
&= (S_2-S_1)+\sum_{j=i+1}^{\lfloor\frac{t}{2}\rfloor}(x_{v_{j}}-x_{v_{t+1-j}})+(r-2)\sum_{j=i}^{\lfloor\frac{t-1}{2}\rfloor}(x_{w_{j}}-x_{w_{t-j}})\\
&> 0,
\end{align*}
and for $i=1,\dots, \lfloor\frac{t}{2}\rfloor-1$, by Lemma \ref{88} again, we have
\begin{align*}
&\quad (\rho(T)+1)\left(x_{w_{i}}-x_{w_{t-i}}\right)-\rho(T)\left(x_{v_{i+1}}-x_{v_{t-i}}\right)\\
&= (r-2)(x_{w_{t-i}}-x_{w_{i}})+\sigma_{T}(T_2)-x_v+\sum_{j=1}^{i}x_{v_{t+1-j}}+\sum_{j=1}^{i-1}(r-2)x_{w_{t-j}}\\
&\quad -\left(\sigma_{T}(T_1)-x_u+\sum_{j=1}^{i}x_{v_{j}}+\sum_{j=1}^{i-1}(r-2)x_{w_{j}}\right)\\
&= (\sigma_{T}(T_2)-x_v-\sigma_{T}(T_1)+x_u)\\
&\quad +\sum_{j=1}^{i}(x_{v_{t+1-j}}-x_{v_{j}})+(r-2)\sum_{j=1}^{i}(x_{w_{t-j}}-x_{w_{j}})\\
&= (S_2-S_1)+\sum_{j=i+1}^{\lfloor\frac{t}{2}\rfloor}(x_{v_{j}}-x_{v_{t+1-j}})+(r-2)\sum_{j=i+1}^{\lfloor\frac{t-1}{2}\rfloor}(x_{w_{j}}-x_{w_{t-j}})\\
&> 0,
\end{align*}
so $\rho(T)\left(x_{v_{i}}-x_{v_{t+1-i}}\right)>(\rho(T)+1)\left(x_{w_{i}}-x_{w_{t-i}}\right)>\rho(T)\left(x_{v_{i+1}}-x_{v_{t-i}}\right)$, as desired.
%This proves the second part of  Item (ii).
\end{proof}

 %For a subset $E_1$ of edges of a hyergraph $G$,   $G-E_1$ denotes the subhypergraph obtained from $G$ by deleting all edges in $E_1$.
%For a subset $E_2$ of edges of the complement  $\overline{G}$ of $G$, $G+E_2$ denotes the hypergraph obtained from $G$ by adding all edges in $E_2$, and in particular, if $L_2=\{e\}$, then we write $G+e$ for $G+\{e\}$.

%
%
%\section{Graft transformations that increase or decrease the distance spectral radius}
%In this section, we propose a graft transformations that increase the distance spectral radius under less restrictions.

%First we give some properties related to the entries of the Perron vector of $D_r(n,a,b)$, which will be used in subsequent proof.

%For integers $n$, $r$, $a$ and $b$ with  $1\leq a\leq b$ and $(a+b)r\le n-2$,
%denote by $D_r(n,a,b)$  the tree obtained  from the path $P_{n-r(a+b)}=v_{1}v_{2}\cdots v_{n-r(a+b)}$ by attaching $a$ and $b$ pendant paths of length $r$ at $v_1$ and $v_{n-r(a+b)}$, respectively.

\begin{Lemma}  \label{ab}
 Let $T=D_{r,\ell}(m,a,b)$, where $a\geq 1$, $b\geq a+2$ and $\ell(a+b)\leq  m-1$. Let $t=m-\ell(a+b)+1$.
 Let $x=x(T)$.  Then

 \begin{conditions}

\item
 $x_{v_{1}}-x_{v_{t}}>0$.

\item
 $x_{v_i}-x_{v_{t+1-i}}>x_{v_{i+1}}-x_{v_{t+1-(i+1)}}$ for $i=1,\dots, \lfloor\frac{t}{2}\rfloor-1$ and $x_{v_i}-x_{v_{t+1-i}}>x_{w_{i}}-x_{v_{t-i}}$ for $i=1,\dots, \left\lfloor\frac{t-1}{2}\right\rfloor$.
\end{conditions}
\end{Lemma}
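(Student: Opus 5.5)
The plan is to apply Lemma \ref{77} to $T=D_{r,\ell}(m,a,b)$ with the central path $P_{t-1,r}=(v_1,e_1,\dots,e_{t-1},v_t)$. Let $T_1$ be the subhypertree formed by the $a$ pendant paths at $u=v_1$, and $T_2$ the subhypertree formed by the $b$ pendant paths at $v=v_t$. Both items of the present lemma are exactly the conclusions of Lemma \ref{77}(ii) (when $t=2$ only item (i) has content). So it suffices to verify the hypothesis
\[
\sigma_T(T_1)-x_{v_1}<\sigma_T(T_2)-x_{v_t}.
\]

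Write $c_1$ for the common value of $\sum_{w\in V(P)\setminus\{v_1\}}x_w$ over the $a$ pendant paths $P$ at $v_1$. These are equal by Lemma \ref{automorphism}, since the paths at $v_1$ are permuted by automorphisms. Define $c_2$ similarly for the $b$ pendant paths at $v_t$. Then $\sigma_T(T_1)-x_{v_1}=a c_1$ and $\sigma_T(T_2)-x_{v_t}=b c_2$, so the goal becomes $a c_1<b c_2$.

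I will argue by contradiction. Suppose $a c_1\ge b c_2$.
\begin{itemize}
\item If equality holds, Lemma \ref{77}(i) gives $x_{v_1}=x_{v_t}$.
\item If $a c_1> b c_2$, then Lemma \ref{77}(ii), applied with the roles of the two ends swapped, gives $x_{v_t}>x_{v_1}$.
\end{itemize}
In both cases $x_{v_t}\ge x_{v_1}$. Now take one pendant path at $v_1$ and one at $v_t$; they have the same length $\ell$ and are vertex-disjoint. By Lemma \ref{pendant}(i) the entries along the path at $v_t$ dominate (weakly) the corresponding entries along the path at $v_1$, so $c_2\ge c_1$. Since $b\ge a+2>a$, this gives $b c_2> a c_1$ whenever $c_1>0$, which holds by positivity of $x$. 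This contradicts $a c_1\ge b c_2$, so $a c_1<b c_2$.

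A sign check is needed on the direction of Lemma \ref{77}(ii). It states that a smaller $\sigma_T(T_1)-x_u$ yields $x_{v_1}>x_{v_t}$: the side with less weight beyond it has the larger entries, consistent with the distance eigenequation. My contradiction uses exactly this direction. The same reasoning appears to need only $b>a$, so the hypothesis $b\ge a+2$ is presumably there for later use; the argument does not need it. With the hypothesis verified, Lemma \ref{77}(ii) gives both items, including $x_{v_1}-x_{v_t}>0$.

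The main obstacle is the comparison $c_2\ge c_1$ when $x_{v_t}\ge x_{v_1}$, that is, making sure Lemma \ref{pendant} transfers the inequality to every vertex of the paths, including the $r-2$ non-cut vertices of each edge. Lemma \ref{pendant}(i) covers the vertices $w'_i$, and Lemma \ref{automorphism} makes all non-cut vertices of an edge equal, so summing termwise gives the claim.
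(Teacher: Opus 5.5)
Your proposal is correct and is essentially the paper's proof: you reduce both items to the hypothesis $\sigma_T(T_1)-x_{v_1}<\sigma_T(T_2)-x_{v_t}$ of Lemma \ref{77}(ii). You then prove that hypothesis by contradiction, combining Lemma \ref{77}, Lemma \ref{pendant}(i), Lemma \ref{automorphism} and $a<b$; reading the second inequality in (ii) as $x_{w_i}-x_{w_{t-i}}$, as intended, Lemma \ref{77}(ii) gives both items. The only difference is the order of the contradiction: the paper deduces $c_1>c_2$ from $ac_1\ge bc_2$, gets $x_{v_1}>x_{v_t}$ via Lemma \ref{pendant}(i), and contradicts $x_{v_1}\le x_{v_t}$ from Lemma \ref{77}, whereas you go from Lemma \ref{77} to $c_2\ge c_1$ and contradict $ac_1\ge bc_2$ directly.
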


\begin{proof}
Let $V_1$ be all the vertices of the $a$ pendant paths at $v_1$ except $v_1$ and
$P'=(u_1, e'_1,u_2,\dots,\\u_{\ell}, e'_{\ell}, v_{1})$ be one pendant path at $v_1$.
Let $V_2$ be all the vertices of the $b$ pendant paths at $v_t$ except $v_t$ and $P''=(w_1, e''_1,w_2,\dots,  w_{\ell}, e''_{\ell}, v_{t})$ be one pendant path at $v_t$. Suppose that $w'_i \in {e'_i}$  for $i=1,\dots,\ell$ and $w''_i \in {e''_i}$  for $i=1,\dots,\ell$.
By Lemma \ref{pendant} (i),
$\sigma_{T}(V_1)=a\sum_{i=1}^{\ell}(x_{u_i}+(r-2)x_{w'_i})$ and $\sigma_{T}(V_2)=b\sum_{i=1}^{\ell}(x_{w_i}+(r-2)x_{w'_i})$.

We claim that $\sigma_{T}(V_1)< \sigma_{T}(V_2)$.
%It is obvious if $a=0$.
Suppose that
%$a\geq 1$ and
$\sigma_{T}(V_1)\geq \sigma_{T}(V_2)$. As $a<b$, so $\sum_{i=1}^{\ell}(x_{u_i}+(r-2)x_{w'_i})>\sum_{i=1}^{\ell}(x_{w_i}+(r-2)x_{w'_i})$, and by Lemma \ref{pendant} (i), we have  $x_{v_1}>x_{v_t}$.
However, by Lemma \ref{77}, we have $x_{v_1}\leq x_{v_t}$, a contradiction. Thus $\sigma_{T}(V_1)< \sigma_{T}(V_2)$, as claimed.
%By Lemma \ref{77} again, the result follows.
Now the result follows from Lemma \ref{77}.
\end{proof}

\section{Proof of Theorem \ref{t0}}

%In this section, we determine the unique hypertree  that maximizes (minimizes, respectively) the distance spectral radius over  all $r$-th power hypertrees on $n$ vertices with  $k$ pendant paths of length $\ell$,  where $r\geq 3$, $k,\ell\geq 1$  and $ k\ell(r-1)<n-1$.

Let $G$ be a connected $r$-uniform hypergraph with $|E(G)|\geq 1$  and  $v\in V(G)$. For integers $p,q\geq 1$, let
$G(v,p,q)$ be the hypergraph obtained from $G$ by attaching two  pendant $r$-uniform paths with one of length $p$ and another one with length $q$  to $v$.

\begin{Lemma}  \label{path}
\cite{LZ1} Let $G$ be a  connected $r$-uniform hypergraph with $|E(G)|\geq 1$  and $v\in V(G)$. For integers $p\geq q\geq 1$,
$\rho(G(v,p,q))<\rho(G(v,p+1,q-1))$.
\end{Lemma}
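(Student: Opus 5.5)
We compare the two hypertrees $G_1=G(v,p,q)$ and $G_2=G(v,p+1,q-1)$ through the distance Perron vector of $G_1$ and Rayleigh's principle. Write the two pendant paths of $G_1$ at $v$ as $P=(u_1,e_1,u_2,\dots,u_p,e_p,v)$ and $Q=(z_1,f_1,z_2,\dots,z_q,f_q,v)$. Then $G_2$ is obtained from $G_1$ by detaching the last edge $f_1$ of $Q$ (the edge containing the end vertex $z_1$) from $z_2$ and reattaching it at the end vertex $u_1$ of $P$. Concretely, $f_1$ is replaced by $f_1'=(f_1\setminus\{z_2\})\cup\{u_1\}$, so that $z_1$ and the $r-2$ pendant vertices of $f_1$ move together. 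When $q=1$, $z_2=v$ and this is still a valid relabelling. Let $x=x(G_1)$. Then
\[
\rho(G_2)-\rho(G_1)\ge x^{\top}(D(G_2)-D(G_1))x,
\]
and it suffices to show that the right-hand side is positive. Equality cannot occur, because then $x$ would also be the Perron vector of $G_2$; the eigenequations would then fail at some vertex of $f_1\setminus\{z_2\}$.

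Let $M=f_1\setminus\{z_2\}$, which has $r-1$ vertices, and let $W=V(G_1)\setminus M$. Distances inside $W$ and inside $M$ are unchanged. For $y\in M$ and $w\in W$ we have
\[
d_{G_1}(y,w)=1+d(z_2,w),\qquad d_{G_2}(y,w)=1+d(u_1,w).
\]
Hence
\[
x^{\top}(D(G_2)-D(G_1))x=2\sigma(M)\sum_{w\in W}\bigl(d(u_1,w)-d(z_2,w)\bigr)x_w .
\]
For $w$ outside $P\cup Q$, the difference is $(p)-(q-1)=p-q+1>0$. So everything reduces to the vertices of $P$ and $Q$, where the difference can be negative.

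Set $\Delta=\sum_{w\in W}(d(u_1,w)-d(z_2,w))x_w$. The contribution of $W\setminus(P\cup Q)$, and of $v$ itself, is at least $(p-q+1)$ times positive masses, hence positive. On $P\cup Q$ (excluding $M$), match vertices symmetrically. Path $P$ together with $Q'=Q\setminus M$ forms a path of length $p+q-1$ through $v$ with ends $u_1$ and $z_2$. Pair each vertex at position $j$ from the $u_1$-end with the vertex at position $j$ from the $z_2$-end. For each such pair $(w,w')$,
\[
d(u_1,w)-d(z_2,w)=-(d(u_1,w')-d(z_2,w')),
\]
so the pair contributes $c_j(x_{w'}-x_w)$ with $c_j\ge0$, where $w$ lies nearer $u_1$. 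Since $p\ge q$, the unmatched middle part lies on the $P$ side, at or near $v$, and there the difference is nonnegative. It therefore suffices to show $x_{w'}\ge x_w$ for each pair, i.e., that entries along $Q'$ dominate the corresponding entries along $P$, measured from the respective ends.

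This is the main obstacle. I would attack it by applying Lemma~\ref{pendant} to the two pendant paths of length $q$ at $v$ after appropriate truncation. More directly, I would use the eigenequation-difference argument of Lemma~\ref{88}: from the $D(G_1)$-eigenequations at $z_{i}$ and $u_{i+p-q+1}$ (the vertices at the same distance $i-1$ from $z_1$ and from $u_{p-q+2}$, hence symmetric with respect to $v$), derive a recursion in the differences. Then argue by induction from $v$ outward, as in Lemma~\ref{77}, that the common sign is determined by $\sigma(\text{side } Q)-\sigma(\text{side } P)$ restricted to the parts beyond the matched region. The unmatched extra length $p-q$ of $P$ makes its far segment heavier in distance. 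This should yield $x_{u_j}\ge x_{z_{j'}}$, with the matching oriented so that the pair difference has the right sign. Plugging these inequalities back in gives $\Delta>0$, which completes the proof.
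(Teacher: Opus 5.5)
Your reduction has the sign backwards, and this route cannot be completed. In $\Delta=\sum_{w\in W}(d(u_1,w)-d(z_2,w))x_w$, the member $w$ of a pair that is nearer $u_1$ carries the negative coefficient. So you need $x_{w'}\ge x_w$, i.e. entries near $z_2$ must dominate entries near the free end $u_1$ of the \emph{longer} path. This already fails for the first pair $(u_1,z_2)$. The eigenequations at $z_1$ and $z_2$ give $(\rho+1)(x_{z_1}-x_{z_2})=\sigma(V\setminus f_1)>0$, and for $p=q$ the automorphism exchanging $P$ and $Q$ gives $x_{u_1}=x_{z_1}>x_{z_2}$. The inequality you announce at the end, $x_{u_j}\ge x_{z_{j'}}$, is exactly the one that makes the pair terms negative. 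The orientation of the matching is dictated by the coefficients, so you cannot choose it. The only positive mass left is $(p-q+1)\sigma(V(G))$, and nothing guarantees that it wins.

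It does not always win. From the eigenequations at $u_1$ and $z_2$, $\Delta=\rho(x_{u_1}-x_{z_2})-(p+q-1)\sigma(M)$. For $p=q$ this simplifies to $\Delta=\sigma(V)-\bigl(2p(r-1)+1\bigr)x_{z_1}$.

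Take $r=3$, $G$ a single edge $\{v,g_1,g_2\}$ and $p=q=3$, so $|V|=15$ and you need $\sigma(V)>13x_{z_1}$. Normalizing $x_{z_1}=1$, the Perron vector is approximately:
\begin{itemize}
\item $0.56$ at $v$ and $0.80$ at $g_1,g_2$;
\item $0.63$ and $0.72$ at the degree-two and degree-one vertices at distance $1$ from $v$ on $P\cup Q$;
\item $0.78$ and $0.82$ at the corresponding vertices at distance $2$;
\item $1$ at the four vertices at distance $3$.
\end{itemize}
Hence $\sigma(V)\approx 12.07<13$ and $x^{\top}(D(G_2)-D(G_1))x<0$. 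The Rayleigh bound therefore says nothing here, even though the lemma is true. (The paper gives no proof of its own; it quotes the lemma from Lin and Zhou.)

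The move that works is the other realization of $G(v,p+1,q-1)$: keep $P\cup Q$ fixed and move the edges of $G$ at $v$ from $v$ to $z_q$. Then
\[
x^{\top}(D(G_2)-D(G_1))x=2\sigma(V(G)\setminus\{v\})\bigl(\sigma(V(P))-\sigma(V(Q)\setminus(f_q\setminus\{z_q\}))\bigr).
\]
The $P$-side has $p(r-1)+1$ vertices against $(q-1)(r-1)+1$ on the $Q$-side. Match $u_i\leftrightarrow z_i$, and the pendant vertices of $e_i$ with those of $f_i$, counting from the free ends. Then you only need $x_{u_i}\ge x_{z_i}$ and the analogous inequalities for pendant vertices.
\begin{itemize}
\item If $p=q$, these hold with equality by symmetry.
\item If $p>q$, they follow from Lemma~\ref{pendant} applied to the length-$q$ pendant paths at $u_{q+1}$ and $v$, once you know $x_{u_{q+1}}>x_v$.
\end{itemize}
Lemma~\ref{77} gives $x_{u_{q+1}}>x_v$ by contradiction. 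Let $T_1$ be the outer $q$ edges of $P$ and $T_2$ the subhypertree spanned by $G$ and $Q$. If $x_{u_{q+1}}\le x_v$, Lemma~\ref{pendant} yields $\sigma(T_1)-x_{u_{q+1}}<\sigma(T_2)-x_v$, and Lemma~\ref{77}(ii) then forces $x_{u_{q+1}}>x_v$.
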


Let $G$ be a  hypergraph with $u,v\in V(G)$ and $e_1,\dots,e_s\in E(G)$ such that $u\notin e_i$ and $v\in e_i$ for $1\leq i\leq s$. Let $e'_i=(e_i\setminus \{v\})\cup \{u\} $ for $1\leq i\leq s$. Suppose that $e'_i\not\in E(G)$ for $1\leq i\leq s$. Let $G'$ be the hypergraph with $V(G')=V(G)$ and $E(G')=(E(G)\setminus \{e_1,\dots,e_s\})\cup \{e'_1,\dots,e'_s\}$. Then we say that $G'$ is obtained from $G$ by moving edges $e_1,\dots,e_s$ from $v$ to $u$.

\begin{Lemma}  \cite{WZ2}  \label{3}
For $t\geq 3$, let $G$ be a hypergraph  consisting of $t$ connected subhypergraphs $G_1, \dots, G_t$  such that $|V(G_i)|\geq 2$ for $1\le i\le t$  and  $V(G_i)\cap V(G_j)=\{u\}$ for $1\le i<j\le t$.
Suppose that
$\emptyset \neq I\subseteq \{3,\dots, t\}$.  Let $v\in V(G_2)\setminus \{u\}$ and $G'$ be the hypergraph obtained from $G$ by moving  all the edges containing $u$ in $G_i$ for all $i\in I$ from $u$ to $v$. If $\sigma_{G}(G_1)\geq\sigma_{G}(G_2)$, then $\rho(G)<\rho(G')$.
\end {Lemma}

%\begin{Theorem} \label{t0}
%Let $T$ be an $r$-th power hypertree of order $n$   with $k$ pendant paths of length $\ell$, where $r\geq 3$. %$\ell\geq 1$ and $1\leq k<\frac{n-1}{\ell(r-1)}$.
%
%
%
%(i) If $k=2$ and $1\leq \ell\leq \frac{n-1}{r-1}-1$,   then $\rho(T)\leq \rho (P_{n,r})$ with equality if and only if $T\cong P_{n,r}$.
%
%(ii) If $k\geq 3$ and $1\leq \ell < \frac{n-1}{k(r-1)}$,
%then $\rho(T)\leq \rho (D_{r,\ell}(n,\lfloor\frac{k}{2}\rfloor, \lceil\frac{k}{2}\rceil))$ with equality if and only if $T\cong D_{r,\ell}(n,\lfloor\frac{k}{2}\rfloor, \lceil\frac{k}{2}\rceil)$.
%
%(iii) If $k=1$ and $2\leq \ell\leq \frac{n-1}{r-1}-2$,   then $\rho(T)\leq \rho (D_{r,1}(n,1,2))$ with equality if and only if $T\cong D_{r,1}(n,1,2)$.
%\end{Theorem}

Let $G$ be a connected hypergraph.
For $u\in V(G)$, the status (or transmission) of $u$ in $G$, denoted by $s_G(u)$, is defined to be the sum of distances from $u$ to all other vertices of $G$, i.e., the row sum of $D(G)$ indexed by vertex $u$, i.e., $s_G(u)=\sum_{v\in V(G)}d_{G}(u,v)$. Let $s(G)=\min\{s_G(u):u\in V(G)\}$. It is known that $\rho(G)\geq s(G)$, see ~\cite[p.~24, Theorem~1.1]{Mi}.

\begin{Lemma}  \label{sum}
 Let $T=D_{r,\ell}(n,a,b)$, where $a\geq 1$, $b\geq a+2$ and $\ell (a+b)\leq m-1$. Let $t=m-\ell(a+b)+1$.
 %Let $x=x(T)$.
 Then
 \[
 \rho(T)>a\ell(r-1)(t-1)+\sum_{i=1}^{\lfloor t/2\rfloor}(t-2i+1)+(r-2)\sum_{i=1}^{\lfloor (t-1)/2\rfloor}(t-2i).
 \]
\end{Lemma}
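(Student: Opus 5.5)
Since the statement only asks for a lower bound on $\rho(T)$, I would use the inequality $\rho(G)\ge s(G)$ quoted just before the lemma, applied to a suitable vertex, or better, a Rayleigh quotient with a test vector. The cleanest route seems to be this. Take the unit vector concentrated on the two endpoint vertices $v_1$ and $v_t$ of the central path, or on the whole set of $t+(r-2)(t-1)$ path vertices. Then bound $x^\top D(T)x$ from below so that the stated quantity appears, and obtain strictness from the Rayleigh characterization: the test vector is not the Perron vector, because it is not positive.

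The shape of the right-hand side suggests a direct count. Note that $\sum_{i=1}^{\lfloor t/2\rfloor}(t-2i+1)$ is exactly $\sum_{i} d_T(v_i,v_{t+1-i})$ over the mirror pairs of path vertices. Likewise $(r-2)\sum_{i=1}^{\lfloor (t-1)/2\rfloor}(t-2i)$ is the corresponding sum of $d_T(w_i,w_{t-i})-$type distances over internal vertices of mirrored edges, with multiplicity $r-2$. Finally $a\ell(r-1)(t-1)$ counts $a\ell(r-1)$ vertices (the pendant part at $v_1$), each at distance at least $t-1$ from $v_t$.

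So my plan is to exhibit an explicit vector $y$ with $y^\top D y/ y^\top y$ at least this sum. The natural candidate is an antisymmetric-free, nonnegative vector supported on a pair structure. Put $y=1/\sqrt2$ on two vertices $p,q$; then the Rayleigh quotient equals $d(p,q)$, which is too small. So instead I would try the status bound. I would choose the vertex $v_t$, or the vertex minimizing status, and show that $s(T)$ exceeds the expression. For this I would decompose the status of an arbitrary vertex $z$ into contributions from mirror pairs: for any pair $\{p,p'\}$ symmetric about the middle of the central path, the triangle inequality gives $d(z,p)+d(z,p')\ge d(p,p')$.

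Summing over the mirror pairs of path vertices yields the first sum, and summing over mirror pairs of internal edge vertices yields the $(r-2)$-sum. For the pendant part I would pair each of the $a\ell(r-1)$ non-root vertices of the $a$ paths at $v_1$ with a distinct vertex among the $b\ge a+2$ paths at $v_t$, which is possible since $b\ell(r-1)\ge a\ell(r-1)$. Each such pair is at distance at least $t-1+2\ge t-1$, so its contribution is at least $t-1$. All these pairs are disjoint, so $s_T(z)$ is at least the stated sum for every $z$, giving $\rho(T)\ge s(T)\ge$ the expression.

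The main obstacle is strictness and the exact bookkeeping of the pairs. For strictness I would note that the leftover vertices are unpaired: there are $(b-a)\ell(r-1)>0$ extra pendant vertices in the paths at $v_t$. Each is at positive distance from $z$ unless it is $z$ itself, and since there are at least two of them, at least one contributes a positive amount. Alternatively, the pendant pairs actually have distance at least $t+1>t-1$. Either way the inequality is strict. I also need to double-check that the index ranges of the mirrored internal vertices match $\lfloor (t-1)/2\rfloor$, including the middle edge when $t$ is even; that edge's internal vertices pair among themselves at distance $1$, which is not counted and only helps.
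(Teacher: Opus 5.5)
Your final argument is correct. It differs from the paper's in how the minimum status is bounded, though both proofs rest on $\rho(T)\ge s(T)$. The Rayleigh-quotient ideas at the start of your proposal are not needed.

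\textbf{The paper's route.} It passes to the balanced sub-hypertree $T'=D_{r,\ell}(m-\ell(b-a),a,a)$ and uses $s(T)>s(T')$. It then asserts that $s(T')$ is attained at the central vertex $v_q$, $q=\lceil t/2\rceil$. Finally it estimates the distances from $v_q$ to the central path, to the degree-one vertices of the central edges, and to the $2a$ pendant paths.

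\textbf{Your route.} You bound $s_T(z)$ for every vertex $z$ at once, via disjoint pairs and $d_T(z,p)+d_T(z,p')\ge d_T(p,p')$.
\begin{itemize}
\item The mirror pairs $\{v_i,v_{t+1-i}\}$ contribute $t-2i+1$.
\item The $r-2$ pairs of degree-one vertices of $e_i$ and $e_{t-i}$ contribute $d_T(w_i,w_{t-i})$ each. This distance is actually $t-2i+1$, not $t-2i$ as you first suggest, which only helps.
\item An injection of the $a\ell(r-1)$ pendant vertices at $v_1$ into the $b\ell(r-1)$ pendant vertices at $v_t$ gives pairs at distance at least $t+1$. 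This already yields strict inequality.
\end{itemize}

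\textbf{What each buys.} Your approach never has to locate a minimum-status vertex or compare $T$ with $T'$. In the paper, $s(T)>s(T')$ is immediate only for vertices lying in $T'$. For the vertices of the extra $b-a$ pendant paths at $v_t$ it needs an argument that is not given, and $s(T')=s_{T'}(v_q)$ is only asserted. The paper's route, in exchange, is a direct count of distances from one explicit vertex of a symmetric hypertree.
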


\begin{proof} We choose in $T$ a pendant path $P':=(u_1, e'_1,u_2,\dots,  u_{\ell}, e'_{\ell}, v_{1})$   at $v_1$, and a pendant path
$P'':=(w_1, e''_1,w_2,\dots,  w_{\ell}, e''_{\ell}, v_{t})$  at $v_t$.
Suppose that $w'_i \in {e'_i}$  for $i=1,\dots,\ell$ and $w''_i \in {e''_i}$  for $i=1,\dots,\ell$.
Suppose that $w_i \in {e_i}$  for $i=1,\dots,t-1$.

Let $T'=D_{r,\ell}(m-\ell(b-a),a,a)$.
As $T'$ is a proper induced subhypergraph of $T$ and the distance between any two vertices in $T'$ remains unchanged, we have
$s(T)>s(T')$.   It suffices to show that \[
s(T')\ge a\ell(r-1)(t-1)+\sum_{i=1}^{\lfloor t/2\rfloor}(t-2i+1)+(r-2)\sum_{i=1}^{\lfloor (t-1)/2\rfloor}(t-2i).
\]

Let $p= \lfloor\frac{t}{2}\rfloor$ and $q= \lceil\frac{t}{2}\rceil$.
It can be verified that $s(T')=s_{T'}(v_{q})$.

Note that
\begin{align*}
&\quad \sum_{i=1}^{\ell}\left(d_{T'}(v_{q},u_{i})+(r-2)d_{T'}(v_{q},w'_{i})\right)
+\sum_{i=1}^{\ell}(d_{T'}(v_{q},w_{i})+(r-2)d_{T'}(v_{q},w''_{i}))\\
&> \sum_{i=1}^{\ell}(r-1)\left(d_{T'}(v_{q},v_{1})+d_{T'}(v_{q},v_{t})\right)\\
&= \ell(r-1)(t-1),
\end{align*}
and
\begin{align*}
&\quad \sum_{i=1}^{t}d_{T'}(v_{q},v_{i})+\sum_{i=1}^{t-1}(r-2)d_{T'}(v_{q},w_{i})\\
&= \sum_{i=0}^{p-1}\left(d_{T'}(v_{q},v_{p-i})+d_{T'}(v_{q},v_{q+1+i})\right)+(r-2)\sum_{i=0}^{p-1}\left(d_{T'}(v_{q},w_{p-i})+d_{T'}(v_{q},w_{q+i})\right)\\
&\quad +(p+1-q)(r-2)d_{T'}(v_{q},w_{q})\\
&\geq \sum_{i=0}^{p-1}(q-p+2i+1)+(r-2)\sum_{i=0}^{p-1}(q-p+2i+1)\\
&= \sum_{i=1}^{p}(t-2i+1)+(r-2)\sum_{i=1}^{p}(t-2i+1)\\
&>\sum_{i=1}^{p}(t-2i+1)+(r-2)\sum_{i=1}^{p}(t-2i).
\end{align*}
Thus $s_{T'}(v_{q})>a\ell(r-1)(t-1)+\sum_{i=1}^{\lfloor\frac{t}{2}\rfloor}(t-2i+1)+(r-2)\sum_{i=1}^{\lfloor (t-1)/2\rfloor}(t-2i)$,  as desired.
\end{proof}

\begin{Lemma}  \label{ab-2}
 Suppose that  $b\geq a+2$ and $\ell (a+b)\leq m-1$.  Then $\rho (D_{r,\ell}(m,a+1,b-1))>\rho (D_{r,\ell}(m,a,b))$.
\end{Lemma}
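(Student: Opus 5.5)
Let $T=D_{r,\ell}(m,a,b)$ with the labeling used in Lemmas \ref{ab} and \ref{sum}: the central path is $(v_1,e_1,\dots,e_{t-1},v_t)$ with $t=m-\ell(a+b)+1$, $w_i\in e_i$, there are $a$ pendant paths at $v_1$ and $b$ at $v_t$, and $x=x(T)$. We fix one pendant path $P''=(z_1,e''_1,z_2,\dots,z_\ell,e''_\ell,v_t)$ at $v_t$. Let $T^*$ be obtained from $T$ by detaching $P''$ from $v_t$ and reattaching it at $v_1$. In this paper's language, $T^*$ comes from moving the edge $e''_\ell$ from $v_t$ to $v_1$. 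Then $T^*\cong D_{r,\ell}(m,a+1,b-1)$, and it suffices to show $\rho(T^*)>\rho(T)$.

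We use the Rayleigh quotient with the vector $x$: $\rho(T^*)-\rho(T)\ge x^\top(D(T^*)-D(T))x$, so it is enough to show that this quadratic form is positive. Distances change only between $V(P'')\setminus\{v_t\}$ and the rest of $T$. For $y\in V(P'')\setminus\{v_t\}$ and $z\notin V(P'')$ we have $d_{T^*}(y,z)-d_T(y,z)=d_T(v_1,z)-d_T(v_t,z)$. Writing $\sigma:=\sigma_T(P'')-x_{v_t}$, we get
\[
\tfrac12\,x^\top(D(T^*)-D(T))x=\sigma\sum_{z\notin V(P'')}\bigl(d_T(v_1,z)-d_T(v_t,z)\bigr)x_z .
\]

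Next we evaluate the sum. A pendant vertex $z$ at $v_1$ contributes $(t-1)x_z$, and one at $v_t$ (not on $P''$) contributes $-(t-1)x_z$. A path vertex contributes $(d(v_1,\cdot)-d(v_t,\cdot))x$, which pairs $v_i$ with $v_{t+1-i}$ and $w_i$ with $w_{t-i}$ antisymmetrically. The sum therefore equals
\[
(t-1)\bigl(\sigma_T(V_1)-\sigma_T(V_2)+\sigma\bigr)-\sum_{i=1}^{\lfloor t/2\rfloor}(t+1-2i)(x_{v_i}-x_{v_{t+1-i}})-(r-2)\sum_{i=1}^{\lfloor (t-1)/2\rfloor}(t-2i)(x_{w_i}-x_{w_{t-i}}),
\]
where $V_1,V_2$ are as in the proof of Lemma \ref{ab}. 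By Lemma \ref{pendant}(i) and symmetry (Lemma \ref{automorphism}), $\sigma_T(V_2)=b\sigma$ and $\sigma_T(V_1)=a\sigma'$, with $\sigma'$ the analogous sum for one path at $v_1$. By Lemma \ref{ab}(i) and Lemma \ref{pendant}, $\sigma'>\sigma$.

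The sign of the whole sum is not obvious: the first term has coefficient $a\sigma'-(b-1)\sigma$, which may be negative, and the path terms are negative by Lemma \ref{ab}. To handle this, we convert the sum through the eigenequations. Using the $D(T)$-eigenequations at $v_1$ and $v_t$,
\[
\rho(T)(x_{v_1}-x_{v_t})=\sum_z\bigl(d(v_1,z)-d(v_t,z)\bigr)x_z ,
\]
so the sum is exactly $\rho(T)(x_{v_1}-x_{v_t})+(t-1)\sigma$ up to the $P''$-correction, since $P''$ is excluded here. Hence
\[
\tfrac12\,x^\top(D(T^*)-D(T))x=\sigma\bigl(\rho(T)(x_{v_1}-x_{v_t})-(t-1)\sigma\bigr),
\]
and Lemma \ref{ab}(i) gives $x_{v_1}>x_{v_t}$. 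What remains is to show $\rho(T)(x_{v_1}-x_{v_t})>(t-1)\sigma$.

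That inequality is the main obstacle, and this is where Lemma \ref{sum} enters. We would bound $\rho(T)(x_{v_1}-x_{v_t})$ from below through the eigenequation expression above: drop the positive excess $a\sigma'-a\sigma$, and control the negative path terms using the monotonicity in Lemma \ref{ab}(ii). The path differences are dominated by $x_{v_1}-x_{v_t}$, and the path coefficients total exactly $\sum_{i}(t-2i+1)+(r-2)\sum_{i}(t-2i)$. Rearranging then gives
\[
\bigl(\rho(T)-\text{(path coefficient sum)}\bigr)(x_{v_1}-x_{v_t})\ \ge\ (t-1)\bigl(a\sigma-(b-1)\sigma\bigr)+\dots
\]
The bound of Lemma \ref{sum} makes the left factor exceed $a\ell(r-1)(t-1)$, and this must be combined with an upper bound of the form $\sigma\le \ell(r-1)\cdot(\text{max entry})$. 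We expect this balancing, where Lemma \ref{sum}'s constant $a\ell(r-1)(t-1)$ is matched against $(t-1)\sigma$, to be the delicate step. If it fails, the fallback is to compare $x_{v_1}-x_{v_t}$ with pendant-path differences via Lemma \ref{pendant}(ii).
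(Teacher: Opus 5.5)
Your setup and reduction are correct and match the paper's. With $\Delta=x_{v_1}-x_{v_t}$ and $\sigma=\sigma_T(P'')-x_{v_t}$, you get $\frac12x^\top(D(T^*)-D(T))x=\sigma L$ with $L=\rho(T)\Delta-(t-1)\sigma$, so everything hinges on $\rho(T)\Delta>(t-1)\sigma$. But that inequality is the whole content of the lemma, and you do not prove it.

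Your bookkeeping also has a sign error, and it leads the sketch astray. A vertex $z$ on a pendant path at $v_1$ has $d_T(v_1,z)-d_T(v_t,z)=-(t-1)$, not $+(t-1)$: moving $P''$ to $v_1$ brings it closer to $V_1$. The correct eigenequation identity is
\[
\rho(T)\Delta=(t-1)(b\sigma-a\sigma')-C,\qquad C=\sum_{i=1}^{\lfloor t/2\rfloor}(t-2i+1)(x_{v_i}-x_{v_{t+1-i}})+(r-2)\sum_{i=1}^{\lfloor (t-1)/2\rfloor}(t-2i)(x_{w_i}-x_{w_{t-i}}).
\]
Two smaller slips: the sum should range over $z\notin V(P'')\setminus\{v_t\}$, and the restricted sum equals $\rho(T)\Delta-(t-1)\sigma$, as in your display, not $+(t-1)\sigma$ as in your sentence.

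With the correct sign, the excess $a(\sigma'-\sigma)$ enters negatively, so you cannot drop it in a lower bound. Your rearranged inequality has right-hand side $(t-1)(a-b+1)\sigma<0$, which gives nothing. Bounding $\sigma$ by $\ell(r-1)$ times a maximal entry is also useless, because it never relates $\sigma$ to $\Delta$. Finally, your sketch never uses $b\ge a+2$, which is indispensable: for $b=a+1$ the two hypertrees are isomorphic, so no strict inequality can hold.

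The missing idea is to use $b-a\ge 2$ to produce a factor $2$, and then to measure everything in units of $\Delta$. Since $\sigma>0$, the identity gives
\[
2(t-1)\sigma\le (t-1)(b-a)\sigma=\rho(T)\Delta+a(t-1)(\sigma'-\sigma)+C .
\]
Apply Lemma~\ref{pendant}(ii) to one pendant path at $v_1$ and to $P''$; this is legitimate because $x_{v_1}>x_{v_t}$ by Lemma~\ref{ab}(i). It gives $\sigma'-\sigma<\ell(r-1)\Delta$, and this is the essential input, not a fallback. Lemma~\ref{ab}(ii) gives $C\le K\Delta$, where $K=\sum_{i=1}^{\lfloor t/2\rfloor}(t-2i+1)+(r-2)\sum_{i=1}^{\lfloor (t-1)/2\rfloor}(t-2i)$. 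Hence
\[
2(t-1)\sigma<\bigl(\rho(T)+a\ell(r-1)(t-1)+K\bigr)\Delta<2\rho(T)\Delta,
\]
where the last step is Lemma~\ref{sum}. This gives $L>0$.

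This is exactly the paper's chain. The paper writes it as $\rho(T)\Delta=2L+(t-1)\bigl(a\sigma'-(b-2)\sigma\bigr)+C$, then bounds the last two terms above by $\bigl(a\ell(r-1)(t-1)+K\bigr)\Delta<\rho(T)\Delta$.
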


\begin{proof}
Let $T=D_{r,\ell}(m,a,b)$ and $x=x(T)$. Let $t=m-\ell(a+b)+1$.

Let $V_1$ be all the vertices of the $a$ pendant paths at $v_1$ except $v_1$ in $T$ and choose a pendant path
$P':=(u_1, e'_1,u_2,\dots,  u_{\ell}, e'_{\ell}, v_{1})$ at $v_1$.
Let $V_2$ be all the vertices of the $b$ pendant paths at $v_t$ except $v_t$ and choose a pendant path $P'':=(w_1, e''_1,w_2,\dots,  w_{\ell}, e''_{\ell}, v_{t})$ at $v_t$.
Suppose that $w'_i \in {e'_i}$  for $i=1,\dots,\ell$ and $w''_i \in {e''_i}$  for $i=1,\dots,\ell$.
Suppose that $w_i \in {e_i}$  for $i=1,\dots,t-1$.

Let $T'$ be the hypergraph obtained from $T$ by moving  edge $e''_\ell$  at $v_t$    from $v_t$ to $v_1$. Obviously, $T'\cong D_{r,\ell}(m,a+1,b-1)$.

As we pass from   $T$ to $T'$, the distance between a vertex of $V(P'')\backslash \{v_t\}$   and a vertex of $V_2\backslash (V(P'')\backslash \{v_t\})$  is increased by $t-1$,
the distance between a vertex of $V(P'')\backslash \{v_t\}$  and a vertex of $V_1$  is decreased by $t-1$,
and  the distance between a vertex of $V(P'')\backslash \{v_t\}$   and  $v_{i}$  is decreased by $t-2i+1$ with $ i=1, \dots, t$, the distance between a vertex of $V(P'')\backslash \{v_t\}$   and  $w_{i}$  is decreased by $t-2i$ with $ i=1, \dots, t-1$
and the distance between any other vertex pair remains unchanged.  So
\[
\frac{1}{2}(\rho(T')-\rho(T))\geq\frac{1}{2}x^{\top}(D(T')-D(T))x=\sum_{i=1}^{\ell}(x_{w_i}+(r-2)x_{w''_i})L,
\]
where
\[
L=(t-1)\left((b-1)\sum_{i=1}^{\ell}(x_{w_i}+(r-2)x_{w''_i})
-a\sum_{i=1}^{\ell}(x_{u_i}+(r-2)x_{w'_i})\right)-C,
\]
and
\begin{align*}
C&= \sum_{i=1}^{t}(t-2i+1)x_{v_{i}}+(r-2)\sum_{i=1}^{t-1}(t-2i)x_{w_{i}}\\
&= \sum_{i=1}^{\lfloor t/2\rfloor}(t-2i+1)(x_{v_{i}}-x_{v_{t+1-i}})+(r-2)\sum_{i=1}^{\lfloor (t-1)/2\rfloor}(t-2i)(x_{w_{i}}-x_{w_{t-i}}).
\end{align*}

By Lemma \ref{ab} (i), $x_{v_{1}}-x_{v_{t}}>0$, and by Lemma \ref{pendant}(ii), we have  $x_{u_i}-x_{w_i}<x_{v_1}-x_{v_t}$ for $i=1, \dots, \ell$, and $x_{w'_i}-x_{w''_i}<x_{v_{1}}-x_{v_{t}}$  for $i=1, \dots, \ell$.
By Lemma \ref{ab} (ii), $C< \sum_{i=1}^{\lfloor t/2\rfloor}(t-2i+1)(x_{v_{1}}-x_{v_{t}})+(r-2)\sum_{i=1}^{\lfloor (t-1)/2\rfloor}(t-2i)(x_{v_{1}}-x_{v_{t}})$.
By Lemma \ref{sum}, $\rho(T)>a\ell(r-1)(t-1)+\sum_{i=1}^{\lfloor t/2\rfloor}(t-2i+1)+(r-2)\sum_{i=1}^{\lfloor (t-1)/2\rfloor}(t-2i)$.
So, from the distance  eigenequations of $T$ at $v_{1}$ and $v_{t}$,
we have
\begin{align*}
&\quad \rho(T)(x_{v_{1}}-x_{v_{t}})\\
&=  L+(t-1)\sum_{i=1}^{\ell}(x_{w_i}+(r-2)x_{w''_i})\\
&=  2L+ (t-1)\left(a\sum_{i=1}^{\ell}(x_{u_i}+(r-2)x_{w'_i})-(b-2)\sum_{i=1}^{\ell}(x_{w_i}+(r-2)x_{w''_i})\right)+C\\
&\leq 2L+ a(t-1)\sum_{i=1}^{\ell}((x_{u_i}-x_{w_i})+(r-2)(x_{w'_i}-x_{w''_i}))+C\\
&<  2L+ a\ell(r-1)(t-1)(x_{v_1}-x_{v_t})\\
&\quad +\left(\sum_{i=1}^{\lfloor t/2\rfloor}(t-2i+1)+(r-2)\sum_{i=1}^{\lfloor (t-1)/2\rfloor}(t-2i)\right)(x_{v_{1}}-x_{v_{t}})\\
&=  2L+ \left(a\ell(r-1)(t-1)+\sum_{i=1}^{\lfloor t/2\rfloor}(t-2i+1)+(r-2)\sum_{i=1}^{\lfloor (t-1)/2\rfloor}(t-2i)\right)(x_{v_{1}}-x_{v_{t}})\\
&<2L+ \rho(T)(x_{v_{1}}-x_{v_{t}}),
\end{align*}
so $L>0$ and then $\rho(T')>\rho(T)$.
\end{proof}

The case when $t=2$ and $\ell=1$  in previous lemma was proved in \cite{LZ1}.
The case when $t$ is odd and $\ell=1$  in previous lemma was proved in \cite{WZ3}.

Let $T$ be a hypertree with $u\in V(T)$. A branch of $T$ at $u$ is a maximal subhypertree in which $u$ is a pendant vertex. It is obvious that there are $\delta_T(u)$ branches of $T$ at $u$, each containing a neighbor of $u$.

For a subset $V_1$ of vertices of a hypergraph $G$, $G-V_1$ denotes the subgraph of $G$ obtained by deleting all vertices in $V_1$ (and the  edges containing any vertex of $V_1$) from $G$, and in particular, if $V_1=\{u\}$, then we write $G-u$ for $G-\{u\}$.

\begin{proof}[Proof of Theorem \ref{t0}]
Let $T$ be a  $r$-th power hypertree  of $m$ edges with $k$ pendant paths of length $\ell$ that maximizes the $D$-spectral radius. Let $x=x(T)$.

%Suppose first that $k=1$ and $2\leq \ell\leq \frac{n-1}{(r-1)}-2$.
%Recall that $P_{n,r}$ ($D_{r,1}(n,1,2)$, respectively) is the unique hypertree with maximum (second maximum, respectively) distance spectral radius over all $r$-uniform hypertrees  on $n$ vertices, see \cite{SI}.
%As $P_{n,r}$  is an $r$-th power hypertree   on $n$ vertices with two pendant paths of length $\ell$ for each $\ell=1,\dots \frac{n-1}{(r-1)}-1$
%and  $D_{r,1}(n,1,2)$ is a  $r$-th power hypertree   on $n$ vertices with one pendant path of length $\ell$ for each $\ell=2,\dots, \frac{n-1}{(r-1)}-2$,
%it  follows that  $T\cong D_{r,1}(n,1,2)$. This proves Item (i).

%Suppose next that $k=2$ and $1\leq \ell\leq \frac{n-1}{(r-1)}-1$. As $P_{n,r}$  is the unique  hypertree  with maximum distance spectral radius over all $r$-uniform hypertrees  on $n$ vertices, Item (ii) follows.

Suppose first that $k=2$ and $1\leq \ell\leq m-1$.
Recall that $P_{m,r}$ is the unique hypertree with maximum  distance spectral radius over all $r$-uniform hypertrees  of $m$ edges, see \cite{LZ1}. As $P_{m,r}$  is an $r$-th power hypertree  of $m$ edges with two pendant paths of length $\ell$ for each $\ell=1,\dots, m-1$, Item (i) follows.

Suppose next that $k\geq 3$ and $1\leq \ell< \frac{m}{k}$.
Let $V_1$  be the the set of vertices at  which there is a pendant path of length $\ell$ in $T$. Obviously, $|V_1|\geq 1$.

We claim that $|V_1|=2$.

Firstly, we show that $|V_1|\ge 2$.

If $\ell=1$, as $k<m$, we have $|V_1|\geq 2$.

Suppose that $\ell\geq 2$ and $|V_1|=1$. Then $k$ pendant paths of length $\ell$ of $T$
are $k$ branches of $T$ at a common vertex, say $u$, which we denote by $B_1, \dots, B_k$.
Let $T_u=T-\cup_{i=1}^k(V(B_i)\setminus\{u\})$.
As $k\ell<m$,  $|V(T_u)|>1$ and $T_u$ is not a pendant path of length larger than or equal to $\ell$ at $u$,
as otherwise, $T$ would have $k+1$ pendant paths of length $\ell$, which is impossible.

 Let $V'_1=V(B_1)\setminus \{u\}$.

Suppose first that there is  a pendant path, say $Q_1$, at $u$ in $T_u$. Then the length $s$ of $Q_1$ is less than $\ell$. Let  $V''_1=V(Q_1)\setminus\{u\}$ and  $G=T-V'_1-V''_1$. Then
$T\cong G(u, \ell,s)$. It is easy to see that $G(u, \ell+s,0)$ is an $r$-th power hypertree on $n$ vertices with $k$ pendant paths of length $\ell$. However, by Lemma \ref{path}, we have
\[
\rho(T)=\rho (G(u, \ell,s))< \rho(G(u, \ell+1,s-1))< \dots <\rho(G(u, \ell+s,0)),
\]
which is a contradiction.
So  there is no  pendant path  at $u$ in $T_u$. There must be a vertex different from $u$ with degree at least three in $T_u$. Choose such a vertex $v$ so that the distance to $u$ is the largest one.
Let $B$ be the branch of $T_u$ at $u$ containing $v$.
Note that there are two pendant paths at $v$ in $T_u$. Let $\ell_1$ and $\ell_2$ be their lengths with $\ell_1\ge \ell_2$.
Then $T\cong H(v, \ell_1, \ell_2)$ for a well chosen subtree $H$ of $T$.
By repeating this process if necessary, we may finally obtain a hypertree $T'$, which is obtained from $T$ so that the branch $B$ of $T_u$ at $u$ is changed into a pendant path $Q_2$ at $u$ with length $\ell'=\frac{|V(B)|-1}{r-1}$.
By Lemma \ref{path}, we have $\rho(T)<\rho(H(v, \ell_1+\ell_2, 0))<\dots<\rho(T')$.
Note that in $T'$, $B_1, \dots, B_k$ are still $k$ pendant paths of length $\ell$.
Let $V_2=V(Q_2)\setminus\{u\}$.
Let $H'=T'-V'_1-V_2$. Then $T'\cong H'(u,\ell,\ell')$. Evidently,  $H'(u,\ell+\ell',0)$ is an $r$-th power hypertree  of $m$ edges with $k$ pendant paths of length $\ell$.
By Lemma \ref{path}, we have
$\rho(T')=\rho(H'(u,\ell,\ell'))<\rho(H'(u,\ell+\ell',0))$, so $\rho(T)<\rho(H'(u,\ell+\ell',0))$, a contradiction.
 Thus  $|V_1|\geq 2$.

Secondly, we show that $|V_1|\le 2$.

Suppose that $|V_1|\geq 3$. Assume that $v_1,v_2,v_3\in V_1$ and $d_T(v_1, v_2)\ge d_T(v_1, v_3), d_T(v_2, v_3)$.
Let $w$ be the vertex on the path from $v_1$ to $v_2$ such that $d_T(v_3, w)$ is as small as possible.
Let $F_i$ be the component of $T-w$ containing $v_i$ for $i=1,2$. So $T$ has a neighbor of $w$, say $w_i$ in $F_i$ for $i=1,2$, of degree at least two.
Assume that $\sigma_T(F_1)\geq\sigma_T(F_2)$. Denote by $\widetilde{T}$ the hypertree  obtained from  $T$  by moving all edges at $w$ except the unique edge containing  $w$ and $w_1$ and the unique edge containing  $w$ and $w_2$  from $w$ to $v_2$.
Obviously, $\widetilde{T}$ is  an $r$-th power  hypertree  of $m$ edges with $k$ pendant paths of length $\ell$.
By Lemma \ref{3}, $\rho(T)<\rho(\widetilde{T})$, a  contradiction.  Thus $|V_1|\le 2$.

It thus follows that $|V_1|=2$, as claimed.

Now we show that $T\cong D_{r,\ell}(m,a,b)$ for some $a$ and $b$ with $1\leq a\leq b$ and $a+b=k$. It is obvious if $\ell=1$.

Suppose that  $\ell\geq 2$.
Assume that $V_1=\{z_1,z_2\}$.
Let $R$ be the path connecting $z_1$ and $z_2$.
Suppose that there is a vertex of degree at least three different from $z_1$ and $z_2$, say $x$,  on $R$. Let $X_i$ be the component of $T-x$ containing $z_i$ for $i=1,2$. Let $x_i$ be the neighbor of $x$ in $X_i$ for $i=1,2$, of degree at least two.
Assume that $\sigma_T(X_1)\geq\sigma_T(X_2)$. Denote by $\widehat{T}$ be the tree  obtained from  $T$  by moving all edges at $x$ except the unique edge containing  $x$ and $x_1$ and the unique edge containing  $x$ and $x_2$ from $x$ to $z_2$.
Obviously, $\widehat{T}$ is   an $r$-th power  hypertree   of $m$ edges with $k$ pendant paths of length $\ell$.
By Lemma \ref{3}, $\rho(T)<\rho(\widehat{T})$, a  contradiction.  Thus all internal vertices of $R$ (if any exists)  are of degree at most $2$.

Denote by $a$ ($b$, respectively ) the number of pendant paths of length $\ell$ at $z_1$ ($z_2$, respectively). Obviously, $a+b=k$, $\delta_T(z_1)\geq a+1$ and $\delta_T(z_2)\geq b+1$.
If  $\delta_T(z_1)\geq a+2$,
then there is branch, say $B$, of $T$ at $z_1$ not containing $z_2$, which is  not a pendant path of length $\ell$, but, as above by using  Lemma \ref{path},   $B$ can not be a pendant path and it can not have a vertex different from $z_1$ of degree at least three. This is impossible.
So $\delta_T(z_1)= a+1$. Similarly, $\delta_T(z_2)= b+1$. Therefore,  $T\cong D_{r,\ell}(m,a,b)$ for some $a$ and $b$ with $a,b\ge 1$ and  $a+b=k$. Assume that  $a\leq b$.

%If $k=2$, then $a=b=1$ and thus $T\cong D_{r,\ell}(n,1,1)$.
If  $b\geq a+2$, then by Lemma \ref{ab-2}, we have $\rho (D_{r,\ell}(m,a+1,b-1))>\rho (D_{r,\ell}(m,a,b))$, a contradiction. Therefore $b-a=0,1$. That is,  $T\cong D_{r,\ell}(m,\lfloor\frac{k}{2}\rfloor, \lceil\frac{k}{2}\rceil)$. Item (ii) follows.

Suppose  in the following that $k=1$ and $2\leq \ell\leq m-2$.
As $T\ncong P_{m,r}$, the maximum degree of $T$ is at least $3$. Let $z$ be the vertex of degree at least $3$ at which there is a pendant path of length at least $\ell$ in $T$.
Let $B'_1, \dots, B'_{\delta_T(z)}$ be $\delta_T(z)$ branches of $T$ at $z$, such that $B'_1$ contains the unique pendant path of length $\ell$. Obviously, $E(B'_1)\geq \ell$. We claim that $\delta_T(z)=3$  and $E(B'_2)=E(B'_3)=1$.
Suppose first that $\delta_T(z)\geq 4$.
Let $H''=T-(V(B'_1)\cup V(B'_2)\backslash \{z\})$.  Evidently,  $H''(z,E(B'_1)+E(B'_2),0)$ is an $r$-th power  hypertree of $m$ edges with exactly one pendant paths of length $\ell$.
Repeat the process as above, and by Lemma \ref{path} repeatedly, we may finally have
$\rho(T)<\rho(H''(z,E(B'_1)+E(B'_2),0))$,  a contradiction. Thus $\delta_T(z)=3$.
Suppose next that $T\ncong D_{r,1}(n,1,2)$.
Note that $D_{r,1}(n,1,2)$ is a  $r$-th power hypertree   of $m$ edges with exactly one pendant path of length $\ell$ for each $\ell=2,\dots, m-2$.
Repeat the process as above, and by Lemma \ref{path} repeatedly, we may finally have
$\rho(T)<\rho(D_{r,1}(m,1,2))$,  a contradiction.
It  follows that  $T\cong D_{r,1}(m,1,2)$. This proves Item (iii).
\end{proof}

\begin{Corollary}  \label{pendant1}
 Let $T$ be an $r$-th power hypertree of  $m$ edges   with $k$ pendant edges, where $r\geq 3$ and $k<m$.
 then $\rho(T)\leq \rho (D_{r,1}(m,\lfloor\frac{k}{2}\rfloor, \lceil\frac{k}{2}\rceil))$ with equality if and only if $T\cong D_{r,1}(m,\lfloor\frac{k}{2}\rfloor, \lceil\frac{k}{2}\rceil)$.
\end{Corollary}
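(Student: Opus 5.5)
This is the special case $\ell=1$ of Theorem \ref{t0}: a pendant path of length $1$ is exactly a pendant edge. So an $r$-th power hypertree with $k$ pendant edges is one with $k$ pendant paths of length $1$. The hypothesis $k<m$ is the condition $k\ell<m$ with $\ell=1$. The plan is therefore to read off the corollary from Items (i) and (ii) of Theorem \ref{t0}, splitting by the value of $k$.

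First I will check the identification of the two notions. A pendant path $(v_1,e_1,v_2)$ of length $1$ at $v_2$ requires $\delta_T(v_2)\ge 2$, $\delta_T(v_1)=1$, and every other vertex of $e_1$ to have degree one. This is precisely the definition of a pendant edge at $v_2$. Conversely, a pendant edge $e$ at $v$ gives a pendant path of length $1$ by choosing any other vertex of $e$ as $v_1$. Here pendant paths should be counted as edge sets rather than vertex sequences; the paper implicitly does this.

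Next I split into cases according to $k$.

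If $k\ge 3$, the condition $1\le\ell<m/k$ holds with $\ell=1$, since $k<m$. Theorem \ref{t0}(ii) then gives exactly the bound $\rho(T)\le\rho(D_{r,1}(m,\lfloor k/2\rfloor,\lceil k/2\rceil))$, together with the equality case.

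If $k=2$, Theorem \ref{t0}(i) with $\ell=1\le m-1$ gives $\rho(T)\le\rho(P_{m,r})$, with equality only for $P_{m,r}$. Here I must note that $D_{r,1}(m,1,1)\cong P_{m,r}$: it is a path of length $m-2$ with one pendant edge attached at each end, which is the loose path of length $m$. Its definition requires $\ell(a+b)=2\le m-1$, and this holds since $k<m$ gives $m\ge 3$.

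If $k=1$, there is nothing to prove. An $r$-th power hypertree is the power of a tree, and a tree with $m\ge2$ edges has at least two leaves, so it has at least two pendant edges. For $m=1$ the single edge is not pendant in the strict sense, because no vertex has degree at least two. Hence $k=1$ cannot occur, and $k=0$ forces $m=1$, a degenerate case that is excluded or trivial. I expect the only real care needed to be in this bookkeeping for small $k$, plus confirming the isomorphism $D_{r,1}(m,1,1)\cong P_{m,r}$; everything else is a direct citation of Theorem \ref{t0}.
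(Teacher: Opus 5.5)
Your proposal is correct and follows the paper's route: the paper states the corollary without proof, directly after Theorem \ref{t0}, as its $\ell=1$ specialization, since a pendant path of length $1$ is exactly a pendant edge. Your case split fills in the bookkeeping the paper leaves implicit: $k\ge 3$ via Item (ii), $k=2$ via Item (i) together with $D_{r,1}(m,1,1)\cong P_{m,r}$, and $k=1$ excluded because the power of a tree with $m\ge 2$ edges has at least two pendant edges.
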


\section{Proof of Theorem \ref{t1} }

\begin{Lemma}  \label{complete}
\cite{WZ2} Let $G$ be a hypergraph with connected induced subhypergraphs $G_0, H_1$ and $H_2$ such that there are two adjacent vertices $w_1$ and $w_2$ in $G_0$ with $N_{G_0}(w_1)\setminus \{w_2\}=N_{G_0}(w_2)\setminus \{w_1\}$, $V(H_i)\cap V(G_0)=\{w_i\}$ for $i=1,2$, $V(H_1)\cap V(H_2)=\emptyset$, and
$V(G)=V(G_0)\cup V(H_1)\cup V(H_2)$.
 Suppose  that $|V(H_i)|\geq 2$ for $i=1,2$. Let $G'$ be the hypergraph obtained from  $G$  by moving all edges containing $w_2$ except the edges in $E(G_0)$ from $w_2$ to $w_1$. Then $\rho(G)>\rho(G')$.
\end{Lemma}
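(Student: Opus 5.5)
We need to prove Lemma \ref{complete}: moving the branch $H_2$ from $w_2$ to $w_1$ strictly decreases $\rho$.

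Let $x=x(G)$ and $\sigma_i=\sigma_G(V(H_i)\setminus\{w_i\})$ for $i=1,2$. By symmetry of the hypotheses we may assume $x_{w_1}\ge x_{w_2}$. If instead $x_{w_2}>x_{w_1}$, we would prove the mirror statement (moving $H_1$ to $w_2$), which gives an isomorphic $G'$: both moves produce $G_0$ with $H_1$ and $H_2$ glued at a single vertex, and the two results are isomorphic via the swap $w_1\leftrightarrow w_2$, which preserves $G_0$ because $N_{G_0}(w_1)\setminus\{w_2\}=N_{G_0}(w_2)\setminus\{w_1\}$.

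First I compare distances. Since $w_1,w_2$ are adjacent twins in $G_0$, we have $d(w_1,z)=d(w_2,z)$ for every $z\in V(G_0)\setminus\{w_1,w_2\}$. Under the move, distances between a vertex of $H_2-w_2$ and any vertex of $V(G_0)\setminus\{w_1,w_2\}$ are unchanged. The distance from $H_2-w_2$ to $w_2$ increases by $1$, and to $w_1$ decreases by $1$. Distances between $H_2-w_2$ and $H_1-w_1$ each decrease by $1$. All other distances are unchanged.

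Now I use the Perron vector of $G'$ instead of that of $G$, so the comparison goes in the needed direction. Let $y=x(G')$. Then
\[
\rho(G)-\rho(G')\ge y^{\top}(D(G)-D(G'))y=2\tau_2\left(\tau_1+y_{w_1}-y_{w_2}\right),
\]
where $\tau_i$ is the sum of the $y$-entries over $H_i-w_i$. In $G'$, the vertex $w_2$ has only its $G_0$-neighbourhood, while $w_1$ carries both branches. I then compare the eigenequations of $D(G')$ at $w_1$ and $w_2$. Every vertex of $G_0$ other than $w_1,w_2$ is equidistant from the two, while every vertex of both branches is one farther from $w_2$. This gives
\[
\rho(G')(y_{w_2}-y_{w_1})=y_{w_1}-y_{w_2}+\tau_1+\tau_2,
\]
that is, $(\rho(G')+1)(y_{w_2}-y_{w_1})=\tau_1+\tau_2>0$. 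So $y_{w_2}>y_{w_1}$, and the sign of $\tau_1+y_{w_1}-y_{w_2}$ is not immediate.

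Making this sign work is the main obstacle. Rather than fight it, I would apply the first-order argument in $G$ and choose which branch to move according to the sign of $\sigma_1-\sigma_2$ together with $x_{w_1}-x_{w_2}$. In $G$, the eigenequations at $w_1,w_2$ give
\[
(\rho(G)+1)(x_{w_1}-x_{w_2})=\sigma_2-\sigma_1.
\]
Moving $H_2$ to $w_1$ changes the quadratic form by
\[
-2\sigma_2\left(\sigma_1+x_{w_1}-x_{w_2}\right),
\]
and moving $H_1$ to $w_2$ changes it by
\[
-2\sigma_1\left(\sigma_2+x_{w_2}-x_{w_1}\right).
\]
This handles only $\rho(G')\ge x^{\top}D(G')x$, which is the wrong direction for a decrease. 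So the first-order argument must indeed be done at $y$, in the ``reverse'' form $\rho(G)\ge y^{\top}D(G)y$ with $G$ viewed as obtained from $G'$ by moving $H_2$ from $w_1$ back to $w_2$. What is needed is $\tau_1>y_{w_2}-y_{w_1}=(\tau_1+\tau_2)/(\rho(G')+1)$, i.e.\ $\rho(G')\tau_1>\tau_2$.

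By the isomorphism above, we may choose which branch is regarded as moved, so we may assume $\tau_1\ge\tau_2$. Then the inequality holds because $\rho(G')>1$. Hence $\rho(G)>\rho(G')$, with strictness coming from $\tau_2>0$ and $\tau_1+y_{w_1}-y_{w_2}>0$.
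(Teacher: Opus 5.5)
Your proof is correct. The paper gives no proof of its own here; the lemma is simply imported from \cite{WZ2}, so there is nothing in-paper to compare against.

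Your argument is self-contained and works. You test $D(G)$ against the Perron vector $y=x(G')$ of the hypergraph \emph{after} the move, which gives $\rho(G)-\rho(G')\ge 2\tau_2(\tau_1+y_{w_1}-y_{w_2})$. You then get $(\rho(G')+1)(y_{w_2}-y_{w_1})=\tau_1+\tau_2$ from the $D(G')$-eigenequations at the twins $w_1,w_2$. The distance bookkeeping is right, and the reduction to $\rho(G')\tau_1>\tau_2$ is right.

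Three remarks on the write-up.

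\textbf{Dead ends.} The opening normalization $x_{w_1}\ge x_{w_2}$ for $x=x(G)$ and the paragraph on first-order changes at $x(G)$ lead nowhere. Delete them.

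\textbf{The symmetry step is unnecessary.} It is valid, but you can prove $\rho(G')\tau_1>\tau_2$ directly. Take any $u\in V(H_1)\setminus\{w_1\}$. The $D(G')$-eigenequation at $u$ gives $\rho(G')y_u=\sum_v d_{G'}(u,v)y_v\ge\sum_{v\ne u}y_v>\tau_2$, since $w_1$ also contributes positively. Hence $\rho(G')\tau_1\ge\rho(G')y_u>\tau_2$, with no assumption comparing $\tau_1$ and $\tau_2$.

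Dropping the symmetry step also removes a small inaccuracy. The swap $w_1\leftrightarrow w_2$ need not be an automorphism of $G_0$ as a hypergraph, because the twin condition only constrains neighbourhoods, i.e.\ the $2$-section. It does preserve all distances, and that is all your argument actually used.

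\textbf{Implicit assumption.} You use throughout that $E(G)=E(G_0)\cup E(H_1)\cup E(H_2)$, so that $H_i$ meets the rest of $G$ only at $w_i$. That is the intended reading of the statement, but it is worth saying explicitly.
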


%Let $T$ be a  $r$-th power hypertree of  $m$ edges with  $k$ pendant paths of length $\ell$, where
%$r\geq 3$, $k=1$ and $2\leq \ell \leq m-2$, or $k\geq 2$, $\ell \geq 2$ and $k\ell\leq m-1$.
%$\ell\geq 2$ and $1\leq k<\frac{m}{\ell} $.
%Then  $\rho(T)\geq \rho (S_{r,\ell}(m,k))$ with equality if and only if $T\cong S_{r,\ell}(m,k)$.

\begin{proof}[Proof of Theorem \ref{t1}]
Let $T$ be a  $r$-th power hypertree  of $m$ edges with $k$ pendant paths of length $\ell$ that minimizes the $D$-spectral radius.

Let $V_1$  be the the set of vertices at which there is a pendant path of length $\ell$ in $T$. Obviously, $|V_1|\geq 1$.
We claim that $|V_1|= 1$. Suppose that $|V_1|\geq 2$ and $u_1,u_2\in V_1$. There is a loose path from $u_1$ to $u_2$ in $G$.  Let $u_3$ be the first vertex of degree at least two on this path from $u_1$ to $u_2$ except $u_1$. Note that it is possible that $u_3=u_2$. Let $T'$ be the hypertree obtained from $T$ by moving all edges at $u_3$ except the unique edge containing  $u_1$ and $u_3$ from $u_3$ to $u_1$.
Obviously, $T'$ is an  $r$-th power hypertree  of $m$ edges with $k$ pendant paths of length $\ell$. By Lemma \ref{complete}, $\rho(T)>\rho(T')$, a contradiction. Thus
 $|V_1|= 1$, as claimed. Let $V_1=\{w\}$.

 %As $k<\frac{n-1}{\ell(r-1)}$, there is a neighbor of $w$, say $z$, not lying on some pendant path of length $\ell$.
 %Note that there are two neighbors of $w$ not lying on the pendant of length $\ell$ if $k=1$, there is  a neighbor of $w$ not lying on any pendant of length $\ell$ if $k\geq2$.

As we have either $k=1$ and $2\leq \ell \leq m-2$, or $k,\ell \geq 2$ and $k\ell\leq m-1$,
there is at least  a neighbor, say $z$ of $w$ not lying on any of the $k$ pendant paths of length $\ell$.
 Suppose that $\delta_T(z)\geq 2$.
 Let $T''$ be the hypertree obtained from $T$ by moving all edges at $z$ except the unique edge containing $w$ and $z$ from $z$ to $w$.
 Obviously, $T''$ is a  $r$-th power hypertree of $m$ edges with $k$ pendant paths of length $\ell$. By Lemma \ref{complete}, $\rho(T)>\rho(T'')$, a contradiction.
Thus $\delta_T(z)= 1$. It follows that $T\cong S_{r,\ell}(m,k)$.
\end{proof}

\vspace{3mm}

\noindent
{\bf Declarations}

\medskip

\noindent
{\bf Competing interests}

On behalf of all authors, the corresponding author states that there is no conflict of interest.
%The authors have no competing interests to declare that are relevant to the content of this article.

\medskip

\noindent {\bf Acknowledgement}

This work was supported by Hebei Natural Science Foundation A2023208006, Hebei Fund for Introducing Overseas Returnees C20230357 and Foundation of China Scholarship Council 202508130088.

\end{document}